\documentclass[12pt,a4paper]{article}

\usepackage[utf8]{inputenc}
\usepackage[T1]{fontenc}
\usepackage[margin=1in]{geometry}
\usepackage{amsmath,amssymb,amsthm,mathtools}
\usepackage{enumitem}
\usepackage{cite}
\usepackage{hyperref}

\hypersetup{colorlinks=true,linkcolor=blue,citecolor=blue,urlcolor=blue}

\theoremstyle{plain}
\newtheorem{theorem}{Theorem}[section]
\newtheorem{proposition}[theorem]{Proposition}
\newtheorem{lemma}[theorem]{Lemma}
\newtheorem{corollary}[theorem]{Corollary}

\theoremstyle{definition}
\newtheorem{definition}[theorem]{Definition}
\newtheorem{example}[theorem]{Example}

\theoremstyle{remark}
\newtheorem{remark}[theorem]{Remark}

\newcommand{\HH}{\mathbb H}
\newcommand{\RR}{\mathbb R}
\newcommand{\NN}{\mathbb N}
\newcommand{\CC}{\mathbb C}
\newcommand{\BB}{\mathcal B}
\newcommand{\KK}{\mathcal K}
\newcommand{\Calk}{\mathcal C}
\newcommand{\Bor}{\mathfrak B}
\newcommand{\Id}{\mathrm{Id}}
\newcommand{\HS}{\mathcal H}
\newcommand{\sphere}{\mathbb S}
\newcommand{\dimH}{\dim_{\HH}}
\newcommand{\sigS}{\sigma_S}
\newcommand{\sigeS}{\sigma_{e}^{S}}
\newcommand{\sigwS}{\sigma_{W}^{S}}
\newcommand{\sigdS}{\sigma_{d}^{S}}
\newcommand{\sigeuS}{\sigma_{e+}^{S}}
\newcommand{\sigedS}{\sigma_{e-}^{S}}
\newcommand{\sigwuS}{\sigma_{w+}^{S}}
\newcommand{\sigwdS}{\sigma_{w-}^{S}}
\newcommand{\rhoS}{\rho_S}
\DeclareMathOperator{\ran}{ran}
\DeclareMathOperator{\ind}{ind}
\DeclareMathOperator{\dist}{dist}
\DeclareMathOperator{\supp}{supp}
\DeclareMathOperator{\Rea}{Re}
\DeclareMathOperator{\Ima}{Im}
\DeclareMathOperator{\essran}{ess\,ran}

\begin{document}
\par \vskip 0.3
cm \centerline{\bf  \large  Weyl's Theorem for Bounded Self-Adjoint Operators\\
  \\
}
 \par \vskip 0.3cm \centerline{\bf \large on Quaternionic Hilbert Spaces \large }

  \par
\vskip 0.7cm

 {\centerline{\large  Hatem Baloudi $^{\textbf{a}}$ Mohamed Ali Dbeibia $^{\textbf{b}}$ and Kamel Mahfoudhi $^{\textbf{c}}$}
 \par
\vskip 0.3cm
\centerline{\it \small $^{\textbf{a}}$Department of Mathematics, Faculty of Sciences of Gafsa,}
\centerline{\it\small  University
of Gafsa, 2112 Zarroug, Tunisia}
\centerline{e-mail: hatem.beloudi@gmail.com}
\centerline{\it \small $^{\textbf{b}}$ University of Sfax, Departement of Mathematics, Faculty of Sciences of Sfax}
\centerline{\it\small Route de soukra Km 3.5, B. P 1171, 3000, Sfax, Tunisia}
\centerline{e-mail: medalidbeibia@gmail.com}
\centerline{\it \small $^{\textbf{c}}$ University of Sousse, Higher Institute of Applied Sciences and Technology of Sousse,}
\centerline{\it\small 4003 Sousse Ibn Khaldoun, Sousse, Tunisia}
\centerline{e-mail: kamelmahfoudhi72@yahoo.com }

\begin{abstract}
Let $T$ be a bounded self-adjoint operator on a separable
infinite-dimensional right quaternionic Hilbert space, and let
$\sigdS(T)$ be the set of right eigenvalues of finite type. We prove that Weyl's theorem holds for $T$:
$$\sigeS(T)=\sigS(T)\setminus\sigdS(T),$$ which completes the inclusion
obtained in the general case. The proof proceeds through a single
three-way criterion: a point of $\sigS(T)$ lies outside $\sigeS(T)$,
and equally lies in $\sigdS(T)$, exactly when it is isolated with a
spectral atom of finite rank. The natural route through Riesz
projections is not available as it stands, because a Riesz projection
need not be orthogonal; we show that for a self-adjoint operator it in
fact is, the Riesz projection of an isolated spectral part coinciding
with the corresponding spectral projection, so that the algebraic and
geometric multiplicities agree. A $2\times2$ quaternionic matrix shows
that this last identity fails without self-adjointness, and a normal
operator with genuinely spherical $S$-spectrum shows what has to
change in the normal case.
\end{abstract}

\noindent
\textbf{Keywords:} Quaternionic Hilbert space, $S$-spectrum, essential
$S$-spectrum, Weyl's theorem; spectral measure, Riesz projection
eigenvalue of finite type.

\medskip

\noindent
\textbf{Mathematics Subject Classification:} {47A10, 47A53, 47A60, 46S10, 47B07.}

\section{Introduction}
\label{sec:intro}

The quaternions $\HH$ form a four-dimensional associative real
division algebra that extends the complex numbers; unlike $\CC$,
quaternionic multiplication is non-commutative, so the order of the
factors in a product $pq$ genuinely matters. This feature is exactly
what makes quaternions well suited to encoding rotations in
three-dimensional space, and it underlies their use in computer
graphics, robotics and aerospace engineering.

The spectral theory built on quaternionic operators is a comparatively
recent development, and it originates in an obstruction that has no
counterpart in the complex theory: if $T$ is a right linear operator
on a quaternionic Hilbert space, the map $$x\mapsto Tx-xq$$ fails to be
right linear as soon as $q\notin\RR$, so the classical resolvent
$T-q\Id$, and with it the classical notion of spectrum, simply does
not make sense for $q\in\HH\setminus\RR$. Motivated by quaternionic
quantum mechanics \cite{Adler1995}, Colombo and Sabadini resolved this
difficulty in 2006 by replacing $T-q\Id$ with the second-order,
real-coefficient pseudo-resolvent $$Q_q(T)=T^2-2\Rea(q)T+|q|^2\Id,$$
whose non-invertibility defines the $S$-spectrum $\sigS(T)$. A detailed
account of this history is given in
\cite[Section 1.2.1]{ColomboGantnerKimsey2018}. The associated slice
hyperholomorphic functional calculus was then developed, for bounded
and for unbounded operators, in a series of papers and monographs by
Colombo, Sabadini, Struppa and their collaborators
\cite{Colombo,ColomboSabadini2009,ColomboSabadini2010,
ColomboGentiliSabadiniStruppa2010,ColomboSabadiniStruppa2011,FJGanter,
GN0},
building on the theory of slice regular functions of Gentili and
Struppa \cite{GS}, related structural results in the same setting,
such as the Krein-Langer factorization for slice hyperholomorphic
functions, were obtained by Alpay, Colombo and Sabadini \cite{KRE}.
The Hilbert space theory proper came somewhat
later. Ghiloni, Moretti and Perotti constructed a continuous slice
functional calculus on quaternionic Hilbert spaces
\cite{GhiloniMorettiPerotti2013}, and the spectral theorem, with a
projection-valued measure supported on $\sigS(T)$, was proved first
for unitary operators by Alpay, Colombo, Kimsey and Sabadini
\cite{AlpayColomboKimsey2016Unitary}, and shortly after for bounded
and unbounded normal operators by Alpay, Colombo and Kimsey
\cite{AlpayColomboKimsey2016}; this material, together with the
$F$-functional calculus and spectral integration, is organized
systematically in the monograph \cite{ColomboGantnerKimsey2018}. Perturbation theory for normal
operators is developed in \cite{PF}, and the spectral theorem has
since been extended from the quaternionic to the more general Clifford
setting in \cite{FD1}. On the Fredholm side, Muraleetharan and
Thirulogasanthar introduced the essential $S$-spectrum through the
quaternionic Calkin algebra \cite{MuraleetharanThirulogasanthar2018}
and, building on it, the Weyl and Browder $S$-spectra
\cite{MuraleetharanThirulogasanthar2019}. Baloudi placed this Fredholm
theory within the broader framework of a homomorphism between
quaternionic Banach algebras \cite{BaloudiFredholm2022}, the
stability of the essential $S$-spectrum of
\cite{MuraleetharanThirulogasanthar2018} under a wider class of
perturbations is studied in \cite{Baloudi2020}, and isolated parts of
the $S$-spectrum and their associated spectral projections are
examined by Arzini and Jaatit \cite{ArziniJaatit2025}. It is against
this background that Baloudi, Belgacem and Jeribi took up the
quaternionic Weyl problem \cite{BaloudiBelgacemJeribi2022}, which is
the starting point of the present paper.

For a bounded self-adjoint operator on a complex Hilbert space, Weyl's
theorem \cite{Weyl1909} states that
$$\sigma_e(T)=\sigma(T)\setminus\sigma_d(T),$$ where $\sigma_e(T)$
denotes the essential spectrum of $T$ and $\sigma_d(T)$ its discrete
spectrum, i.e., its isolated eigenvalues of finite multiplicity. The
theorem extends well beyond the self-adjoint case, see \cite{COB} for
its original extension to non-normal operators, \cite{HAR} for a
systematic treatment through Fredholm and Weyl operators, and
\cite{OUD} for its stability under perturbation.

In \cite{BaloudiBelgacemJeribi2022}, Baloudi, Belgacem and Jeribi
introduced the discrete $S$-spectrum $\sigdS(T)$, the right
eigenvalues of finite type, defined by a decomposition of the space
(Definition \ref{def:finite-type}) and proved one half of Weyl's
theorem: for every bounded right linear $T$,
\begin{equation}
 \sigeS(T)\subset\sigS(T)\setminus\sigdS(T).
 \label{eq:inclusion}
\end{equation}
The reverse inclusion remained open, the obstruction being that
$S$-spectral points come in spherical classes
$[q]=\{\Rea(q)+I|\Ima(q)|:I\in\sphere\}$ rather than singletons.

The self-adjoint case is not merely the easiest one. Observables, and
in particular Hamiltonians, are self-adjoint operators in quaternionic
quantum mechanics \cite{Adler1995}, and Weyl's theorem is precisely
what separates the bound states of a system, isolated energy levels
of finite degeneracy from the scattering continuum. In particular,
Theorem \ref{thm:main} shows that the essential $S$-spectrum is
invariant under compact self-adjoint perturbations, expressing the
corresponding stability of the continuum.
\begin{center}
Where does the difficulty actually lie?
\end{center}

Self-adjointness places the $S$-spectrum inside $\RR$, so each
$S$-spectral set reduces to a singleton $[\lambda]=\{\lambda\}$,
which invites an immediate conclusion. The natural line of reasoning is
as follows: suppose \(\lambda\in\sigS(T)\setminus\sigeS(T)\). Then
\(T-\lambda\Id\) is Fredholm and \(\lambda\) is isolated, so the Riesz
projection \(P_{\{\lambda\}}\) is well defined. On
\(\ran(P_{\{\lambda\}})\), the operator \(T\) restricts to a
self-adjoint operator whose \(S\)-spectrum is \(\{\lambda\}\), such a
restriction must coincide with \(\lambda\Id\). Consequently
\(\ran(P_{\{\lambda\}})\subset\ker(T-\lambda\Id)\), a finite-dimensional
space, and therefore \(\lambda\in\sigdS(T)\) by
\cite[Theorem 3.10]{BaloudiBelgacemJeribi2022}. This argument is not valid as it stands. A Riesz projection need
not be orthogonal, so $\ran(P_{\{\lambda\}})$ need not reduce $T$ and
$T|_{\ran(P_{\{\lambda\}})}$ need not be self-adjoint and the
inclusion $\ran(P_{\{\lambda\}})\subset\ker(T-\lambda\Id)$ genuinely
fails in general, as Example \ref{ex:jordan} shows. The present paper
resolves this in two independent ways.

\begin{enumerate}
\item \textbf{Weyl's theorem} (Theorem \ref{thm:main}), proved without
Riesz projections at all. Everything follows from the single criterion
of Proposition \ref{prop:criterion}: for $\lambda\in\sigS(T)$, the
statements $\lambda\notin\sigeS(T)$ and $\lambda\in\sigdS(T)$ are
both equivalent to
\[
 \lambda\ \text{isolated in}\ \sigS(T)
 \quad\text{and}\quad
 \dimH\ker(T-\lambda\Id)<\infty .
\]
Each equivalence is obtained directly from the spectral measure, the
decomposition $$\HS=\ran(E(\{\lambda\}))\oplus\ker(E(\{\lambda\}))$$
being orthogonal by construction. Only one of the four implications
carries the argument, and Remark \ref{rem:where-the-work-is} isolates
it: it is the one that produces the decomposition required by the
definition of $\sigdS$, and it is unavailable without Lemma
\ref{lem:restriction}. The proof is therefore independent
of \eqref{eq:inclusion} and of
\cite[Theorem 3.10]{BaloudiBelgacemJeribi2022}.

\item \textbf{Riesz projections are orthogonal here} (Theorem
\ref{thm:riesz}). For $T=T^*$ and $\Delta$ an isolated part of
$\sigS(T)$ one has $P_\Delta=E(\Delta)$, in particular
$P_\Delta^*=P_\Delta$, and
\[
 m_T(\lambda)=\dimH\ran\big(P_{\{\lambda\}}\big)
 =\dimH\ker(T-\lambda\Id)
\]
for isolated $\lambda$. This supplies exactly the step missing above,
so it also repairs the Riesz-projection argument and it identifies
the algebraic multiplicity of \cite{BaloudiBelgacemJeribi2022} with
the geometric one.
\end{enumerate}

What makes the self-adjoint case work is thus not that spherical
classes degenerate to points that only simplifies bookkeeping
but that the spectral measure furnishes an orthogonal
decomposition adapted to an isolated spectral part. Example
\ref{ex:sphere} bears this out. It is a normal operator with genuinely
spherical $S$-spectrum for which the conclusion still holds, and it
locates the obstacle in the normal case, namely that $Q_q(T)$ is no
longer a square for $q\notin\RR$, so that Lemma \ref{lem:square},
which underlies the Fredholm criterion of Lemma \ref{lem:fredholm},
does not apply.

Section \ref{sec:prelim} fixes hypotheses and notation, together with
the Fredholm classes and the various essential and Weyl
$S$-spectra. Section \ref{sec:fredholm} establishes two algebraic
facts about the $S$-spectrum used later, invertibility via squares
(Lemma \ref{lem:square}), and the coincidence, once $T$ is
self-adjoint, of the essential and Weyl $S$-spectra together with
their upper and lower variants (Lemma \ref{lem:hierarchy}). Section
\ref{sec:spectral} recalls the spectral
theorem and establishes the two lemmas on spectral projections on
which everything rests. Section \ref{sec:main} proves the criterion
$(1)$, deduces Weyl's theorem and closes with a characterization in
terms of singular sequences, the form in which the essential
$S$-spectrum is used in practice. Section \ref{sec:riesz} proves the
complement $(2)$. Section \ref{sec:examples} contains the examples and
Section \ref{sec:conclusion} two open problems.

\section{Preliminaries}
\label{sec:prelim}

\subsection{Quaternions}
We will use $\mathbb{H}$ to denote the Hamiltonian skew field of quaternions, which has a standard basis $\{1,i,j,k\}$. It is formally defined as: $$\mathbb{H} = \{q = x_0 + x_1i + x_2j + x_3k : x_i \in \mathbb{R}, i = 0, 1, 2, 3\}.$$The three imaginary units $i,j,k$ satisfy the following  relations:$$i^2 = j^2 = k^2 = ijk = -1, \quad ij = -ji = k, \quad ki = -ik = j, \quad jk = -kj = i.$$
For a quaternion $q = x_0 + x_1i + x_2j + x_3k \in \mathbb{H}$, its real part is $\text{Re}(q) = x_0$ and its imaginary part is $\text{Im}(q) = x_1i + x_2j + x_3k$. The conjugate and the norm of $q$ are then defined as:
$$\overline{q} = \text{Re}(q) - \text{Im}(q) \quad \text{and} \quad |q| = \sqrt{x_0^2 + x_1^2 + x_2^2 + x_3^2}.$$
The unit sphere of purely imaginary quaternions is $\mathbb{S}$, given by:
$$\mathbb{S} = \{q \in \mathbb{H} : \text{Re}(q) = 0 \text{ and } \overline{q}q = 1\}.$$
It is important to note that $\mathbb{S}$ is a two-dimensional sphere in $\mathbb{R}^4$. For any quaternion $q\in\mathbb{H}\setminus\mathbb{R}$, we can write it as:

$$q = \text{Re}(q) + I_q|\text{Im}(q)|,$$where $I_q = \frac{\text{Im}(q)}{|\text{Im}(q)|} \in \mathbb{S}$. This allows us to associate a two-dimensional sphere with $q$, defined as:$$[q] = \text{Re}(q) + \mathbb{S}|\text{Im}(q)|.$$
This sphere, $[q]$, is centered at the real point ${\rm Re}(q)$ and has a radius  $|Im(q)|$. This sphere is equivalent to the set
$\{hqh^{-1}:\ h\in \mathbb{H}^*\}$. For the full proof, refer to
\cite[Chapter 2]{ColomboGantnerKimsey2018}.
Finally, for any $I\in\mathbb{S}$, we set:

$$\mathbb{C}_I = \mathbb{R} + I\mathbb{R}.$$From this, it follows that:$$\mathbb{H} = \bigcup_{I \in \mathbb{S}} \mathbb{C}_I.$$

\subsection{Hypotheses}

\begin{itemize}[leftmargin=3em]
\item[(A1)] $\HS$ is a right quaternionic Hilbert space: a right
$\HH$-module with a quaternion-valued inner product, right linear in
its second argument, hermitian and positive definite, complete for
$\|x\|=\langle x,x\rangle^{1/2}$. We assume $\HS$ separable; this is
used only through the spectral theorem
and through the extraction of orthonormal sequences in Theorem
\ref{thm:weyl-criterion}. We write $\BB(\HS)$ for the real unital
Banach algebra of bounded right linear operators, $\KK(\HS)$ for the
closed ideal of compact ones, $T^*$ for the adjoint.

\item[(A2)] From Section \ref{sec:fredholm} on, $\dimH\HS=\infty$, so
the Calkin algebra $\Calk(\HS)=\BB(\HS)/\KK(\HS)$ is a nonzero real
unital Banach algebra; $\pi$ denotes the quotient map.

\item[(A3)] Section \ref{sec:riesz} only, $\HS$ carries in
addition a fixed left scalar multiplication $q\mapsto L_q$ with
$L_q\in\BB(\HS)$, $L_qL_{q'}=L_{qq'}$ and $L_q^*=L_{\overline q}$.
This is what gives a meaning to $T-\overline s\,\Id$ for $s\notin\RR$,
hence to the $S$-resolvent operators; it is the setting of
\cite{ColomboGantnerKimsey2018,BaloudiBelgacemJeribi2022}. Hypothesis
(A3) costs nothing, once a Hilbert basis $(e_n)$ of $\HS$ is fixed,
\[
 L_q\Big(\sum_n e_nx_n\Big)=\sum_n e_n\,qx_n
\]
defines such a family, each $L_q$ being isometric with
$L_qL_{q'}=L_{qq'}$ and $L_q^*=L_{\overline q}$; this construction is
standard, see \cite[Section 3]{GhiloniMorettiPerotti2013}. The family
depends on the chosen basis, but nothing below does: only the
existence of one such family is used. We state it separately
only because Sections \ref{sec:fredholm}--\ref{sec:main} do not use
it. Since $Q_q(\cdot)$ has real coefficients, everything before
Section \ref{sec:riesz} needs (A1)--(A2) only.
\end{itemize}

To avoid a collision that is easy to make in this
subject, we fix once and for all:
\[
 \begin{array}{ll}
 I,\ I':&\text{imaginary units, i.e.\ elements of }\sphere;
 \text{ they occur only in }\CC_I\text{ and }ds_I,\\[2pt]
 \Id:&\text{the identity operator on }\HS,\\[2pt]
 q,\ s:&\text{quaternionic spectral parameters},\\[2pt]
 \lambda,\ t:&\text{real spectral parameters}.
 \end{array}
\]
The letter $I$ never denotes an operator. $\dimH W$ is the
cardinality of an orthonormal basis of the closed right submodule $W$,
the real dimension being $4\dimH W$, finiteness is unaffected. We write
$$\ind(A)=\dimH\ker(A)-\dimH\ker(A^*)$$ for the index of a Fredholm
operator.

\subsection{Different Types of S-Spectra}
For $T\in \BB(\HS)$ and $q\in\mathbb{H}$, we define the associated operator $\mathcal{Q}_q(T):\ \HS\longrightarrow \HS$ by setting
\begin{align*}Q_{q}(T):=T^2-2{\rm Re}(q)T+\vert q\vert^2Id.\end{align*}
Let $T\in \BB(\HS)$. The $S$-spectrum of $T$ is defined as
\begin{align*}\sigma_{S}(T)=\Big\{q\in\mathbb{H}:\ Q_{q}(T)\mbox{ is not invertible in } \mathcal{B}(V^{R}_{\mathbb{H}})\Big\}.\end{align*}
Then, we define the $S$-resolvent set of $T$ as
\begin{align*}\rho_{S}(T)=\mathbb{H}\setminus\sigma_{S}(T).\end{align*}
For a complete explanation, we refer to \cite{ColomboGantnerKimsey2018}. Note that $\sigma_{S}(T)$ is a non-empty compact set, as shown in \cite{FJGanter}. If the relation $Tu=uq$ holds for some non-zero $u\in \HS$ and a quaternion $q\in\mathbb{H}$, then $u$ is called an eigenvector of $T$ with the right eigenvalue $q$.
\begin{definition} \label{def:axially-symmetric}
{\rm A set $\Omega\subset\mathbb{H}$ is called\\
\noindent $(i)$ axially symmetric if $\{hqh^{-1}:\ h\in\mathbb{H}\}\subset \Omega$ for any $q\in \Omega$ and\\
\noindent $(ii)$ a slice domain (or $s$-domain for short) if $\Omega$ is open, $\Omega\cap\mathbb{R}\neq\emptyset$ and $\Omega\cap\mathbb{C}_{I}$ is a domain in $\mathbb{C}_{I}$, for any $I\in\mathbb{S}$.}
\end{definition}

\noindent The $S$-spectrum $\sigma_{S}(T)$ and the $S$-resolvent set $\rho_{S}(T)$ are both axially symmetric. See \cite{ColomboGantnerKimsey2018} for further details. For $\lambda\in\RR$,
$Q_\lambda(T)=(T-\lambda\Id)^2$.
\begin{definition}[{\cite[Definition 3.7]{BaloudiBelgacemJeribi2022}}]
\label{def:finite-type}
Let $T\in\BB(\HS)$. A point $q\in\sigS(T)$ is a right eigenvalue
of finite type if $\HS=\HS_1\oplus\HS_2$ is a topological direct sum
into closed $T$-invariant right submodules with
\begin{enumerate}[label=(\textup{F}\arabic*),leftmargin=3em]
\item $\dimH\HS_1<\infty$,
\item $\sigS(T|_{\HS_1})\cap\sigS(T|_{\HS_2})=\varnothing$,
\item $\sigS(T|_{\HS_1})=[q]$.
\end{enumerate}
The set of such $q$ is the discrete $S$-spectrum $\sigdS(T)$.
\end{definition}
From this point onward, we assume (A2) holds. Denote by \(\Phi(\HS)\) the set of
Fredholm operators, i.e., those bounded operators having closed range and
finite-dimensional kernel and cokernel: \(\dimH\ker(A)<\infty\) and
\(\dimH\ker(A^*)<\infty\). Within this class, let
$$\Phi_0(\HS)=\{A\in\Phi(\HS):\ind(A)=0\}$$ consist of the Fredholm operators of
index zero. The essential and Weyl \(S\)-spectra of \(T\) are then
defined by
\[
 \sigeS(T)=\{q\in\HH:\ Q_q(T)\notin\Phi(\HS)\},
 \qquad
 \sigwS(T)=\{q\in\HH:\ Q_q(T)\notin\Phi_0(\HS)\}.
\]
As in the classical theory, $\Phi(\HS)$ splits into the two
semi-Fredholm classes
$$\Phi_+(\HS)=\{A\in\BB(\HS):\ \ran(A)\ \text{closed},\
 \dimH\ker(A)<\infty\},$$
$$ \Phi_-(\HS)=\{A\in\BB(\HS):\ \ran(A)\ \text{closed},\
 \dimH\ker(A^*)<\infty\},$$
so that $\Phi(\HS)=\Phi_+(\HS)\cap\Phi_-(\HS)$
\cite{MuraleetharanThirulogasanthar2018}, and correspondingly $\sigeS$
and $\sigwS$ each split into an upper and a lower part.The
upper and lower essential $S$-spectra
\[
 \sigeuS(T)=\{q\in\HH:\ Q_q(T)\notin\Phi_+(\HS)\},
 \qquad
 \sigedS(T)=\{q\in\HH:\ Q_q(T)\notin\Phi_-(\HS)\},
\]
so that $\sigeS(T)=\sigeuS(T)\cup\sigedS(T)$, and the upper and lower Weyl $S$-spectra
$$\sigwuS(T)=\{q\in\HH:\ Q_q(T)\notin\Phi_+(\HS)\ \text{or}\
 \ind(Q_q(T))>0\},$$
 $$\sigwdS(T)=\{q\in\HH:\ Q_q(T)\notin\Phi_-(\HS)\ \text{or}\
 \ind(Q_q(T))<0\},$$
so that likewise $\sigwS(T)=\sigwuS(T)\cup\sigwdS(T)$. These four
spectra, together with the Browder $S$-spectrum, are studied for
general $T$ in
\cite{MuraleetharanThirulogasanthar2018,MuraleetharanThirulogasanthar2019},
they are not needed in general here, but Lemma \ref{lem:hierarchy}
below shows that all six of
$\sigeS,\sigwS,\sigeuS,\sigedS,\sigwuS,\sigwdS$ collapse to a single
set once $T$ is self-adjoint. By Atkinson's theorem in the
quaternionic setting
\cite{MuraleetharanThirulogasanthar2018}, $Q_q(T)\in\Phi(\HS)$ if and
only if $\pi(Q_q(T))$ is invertible in $\Calk(\HS)$, so that
\begin{equation}
 \sigeS(T)=\sigS\big(\pi(T)\big),
 \label{eq:calkin}
\end{equation}
which is the definition adopted in
\cite{Baloudi2020,MuraleetharanThirulogasanthar2018}; the two
conventions therefore agree. Since $\Phi_0(\HS)\subset\Phi(\HS)$ we
have $\sigeS(T)\subseteq\sigwS(T)$, and since $q\in\rhoS(T)$ makes
$\pi(Q_q(T))$ invertible we have $\sigeS(T)\subset\sigS(T)$.

\section{Some lemmas on the $S$-spectrum and the essential $S$-spectrum}
\label{sec:fredholm}

We begin by stating a basic algebraic observation that will be invoked
several times in what follows, allowing us to relate $\sigS(T)$ to the
$S$-spectra of restrictions of $T$. Recall that $\HH$ is a
four-dimensional associative real division algebra, and that, upon
fixing a Hilbert basis of $\HS$ (hypothesis (A3)), the space
$\BB(\HS)$ acquires the structure of a quaternionic two-sided
Banach algebra, that is, a real Banach algebra carrying left and right
$\HH$-actions, $$q\mapsto qA ~~\hbox{and}~~ q\mapsto Aq,$$ which are compatible
with composition in the sense that $$q(AB)=(qA)B ~~\hbox{and}~~(AB)q=A(Bq).$$

This is the structure underlying the Fredholm theory relative to a
homomorphism of quaternionic Banach algebras developed in
\cite{BaloudiFredholm2022}, which carries the classical theory of
Fredholm elements relative to a Banach algebra homomorphism over to the
quaternionic setting. Lemma \ref{lem:square} below, however, requires
none of this two-sided structure, it remains valid in any unital,
possibly non-commutative, real algebra, whether quaternionic or not.
This is precisely what allows it to be applied to $\BB(\HS)$ viewed
merely as a real algebra, without appealing to hypothesis (A3)
consistent with the observation in Section \ref{sec:prelim} that
Sections \ref{sec:fredholm}-\ref{sec:main} require only (A1)-(A2).
\begin{lemma}
\label{lem:square}
In a real unital algebra, $a$ is invertible if and only if $a^2$ is.
Consequently, if $\HS=\HS_1\oplus\HS_2$ is a topological direct sum
into closed $T$-invariant right submodules, then
$$Q_q(T)=Q_q(T|_{\HS_1})\oplus Q_q(T|_{\HS_2}) ~~\hbox{and}~~ \sigS(T)=\sigS(T|_{\HS_1})\cup\sigS(T|_{\HS_2}).$$
\end{lemma}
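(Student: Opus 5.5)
The plan has two parts: a purely algebraic statement about squares, and then a block-diagonal decomposition of $Q_q(T)$.

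\emph{Squares.} If $a$ is invertible, then $a^2$ has inverse $(a^{-1})^2$. Conversely, suppose $b$ is a two-sided inverse of $a^2$, so $a^2b=ba^2=1$. Then
\[
a(ab)=1 \qquad\text{and}\qquad (ba)a=1,
\]
so $a$ has a right inverse $ab$ and a left inverse $ba$. In any associative unital algebra these must coincide:
\[
ba=(ba)(a\,ab)=(ba\,a)(ab)=ab .
\]
Hence $a$ is invertible. Nothing here uses commutativity or the real scalars, so the argument applies to $\BB(\HS)$ viewed merely as a real algebra.

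\emph{Decomposition.} Let $P$ be the bounded idempotent with range $\HS_1$ and kernel $\HS_2$; it is bounded because the sum is topological, by the closed graph theorem. Both summands are $T$-invariant, so $T=T|_{\HS_1}\oplus T|_{\HS_2}$ in the decomposition. Consequently $T^2$, $\Rea(q)T$ and $|q|^2\Id$ all respect it as well, and this holds because the coefficients of $Q_q$ are real. It follows that
\[
Q_q(T)=Q_q(T|_{\HS_1})\oplus Q_q(T|_{\HS_2}).
\]

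\emph{Spectra.} A block-diagonal bounded operator $A_1\oplus A_2$ on a topological direct sum is invertible in $\BB(\HS)$ if and only if each $A_i$ is invertible in $\BB(\HS_i)$.
\begin{itemize}
\item If both $A_i$ are invertible, then $A_1^{-1}\oplus A_2^{-1}$ is bounded, since it equals $A_1^{-1}P+A_2^{-1}(\Id-P)$, and it inverts $A_1\oplus A_2$.
\item Conversely, let $B$ be the inverse of $A=A_1\oplus A_2$. Since $A$ commutes with $P$, so does $B$, because $PB=BAPB=BPAB=BP$. Then $B$ maps $\HS_i$ into itself, and its restrictions invert the $A_i$.
\end{itemize}
Applying this to $Q_q(T)$ gives $q\in\rhoS(T)$ if and only if $q\in\rhoS(T|_{\HS_1})\cap\rhoS(T|_{\HS_2})$. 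Taking complements yields the union formula for $\sigS(T)$.

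The only point needing care is the commutation argument showing that the inverse respects the decomposition even when the sum is not orthogonal. It is also the one place where the first part of the lemma, together with associativity, is genuinely used in spirit: one-sided inverses are upgraded to two-sided ones. For real $q=\lambda$ the squares statement gives additionally that $Q_\lambda(T)=(T-\lambda\Id)^2$ is invertible exactly when $T-\lambda\Id$ is.
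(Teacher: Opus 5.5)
Your proof is correct and follows essentially the paper's route. The only variation is in the squares part: the paper observes that $a$ commutes with $(a^2)^{-1}$, so $a(a^2)^{-1}=(a^2)^{-1}a$ is already a two-sided inverse, whereas you identify the right inverse $ab$ with the left inverse $ba$, an equally short argument. For the second part the paper simply asserts that an operator respecting a topological direct sum is invertible exactly when both restrictions are, and your computation $PB=BAPB=BPAB=BP$ supplies that omitted detail (which, contrary to your closing remark, does not use the squares statement at all).
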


\begin{proof}
For the first assertion, only the nontrivial implication requires proof,
since the converse is immediate. If $a$ is invertible, then so is
$$a^2=a\cdot a,$$ being a product of invertible elements, with inverse
$$a^{-1}a^{-1}.$$ Conversely, suppose $a^2$ is invertible. Since $a$
commutes with $a^2$, it also commutes with $$(a^2)^{-1},$$ hence
$$b=a(a^2)^{-1}=(a^2)^{-1}a$$ satisfies $ab=ba=1$. For the second
assertion, every real polynomial in $T$ preserves both summands, and an
operator that respects a topological direct sum is invertible precisely
when both of its restrictions are.
\end{proof}

\begin{proposition}
\label{prop:real-spectrum}
If $T\in\BB(\HS)$ is self-adjoint, then $\sigS(T)\subset\RR$.
\end{proposition}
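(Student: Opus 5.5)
The plan is to show directly that for $q=\alpha+I\beta$ with $\beta=|\Ima(q)|>0$, the operator $Q_q(T)$ is invertible. Since $T$ has real coefficients in $Q_q(T)$, completing the square gives
\[
 Q_q(T)=(T-\alpha\Id)^2+\beta^2\Id .
\]
Set $A=T-\alpha\Id$. Then $A$ is self-adjoint, because $\alpha\in\RR$ and $\Id^*=\Id$. No left scalar multiplication is needed here, consistent with the claim that Sections \ref{sec:fredholm}--\ref{sec:main} use only (A1)--(A2).

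The key estimate is a lower bound for $\langle x, Q_q(T)x\rangle$. For $x\in\HS$,
\[
 \langle x,(A^2+\beta^2\Id)x\rangle=\langle Ax,Ax\rangle+\beta^2\langle x,x\rangle=\|Ax\|^2+\beta^2\|x\|^2\ \ge\ \beta^2\|x\|^2 .
\]
This uses $\langle x,A^2x\rangle=\langle A^*x,Ax\rangle=\langle Ax,Ax\rangle$, which is the defining property of the adjoint in the quaternionic inner product. The value is real, so Cauchy--Schwarz, $|\langle x,y\rangle|\le\|x\|\|y\|$, applies in quaternionic Hilbert spaces exactly as in the complex case. It yields $\|Q_q(T)x\|\ge\beta^2\|x\|$.

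From this bound, $B:=Q_q(T)$ is injective and has closed range. $B$ is also self-adjoint, being a real polynomial in the self-adjoint $T$. Hence $\ran(B)^\perp=\ker(B^*)=\ker(B)=\{0\}$. Since closed right submodules of $\HS$ admit orthogonal complements, $\ran(B)=\HS$. The bounded inverse then follows from the lower bound, with $\|B^{-1}\|\le\beta^{-2}$. Thus $q\in\rhoS(T)$ for every $q\notin\RR$, that is, $\sigS(T)\subset\RR$.

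The only step I expect to need care is the quaternionic bookkeeping. One point is the identity $\ran(B)^\perp=\ker(B^*)$. The other is the existence of orthogonal complements for closed right submodules, which is the projection theorem in quaternionic Hilbert spaces. Both are standard, for instance in \cite{GhiloniMorettiPerotti2013,ColomboGantnerKimsey2018}, and I would simply cite them. Everything else is the same as in the complex argument.
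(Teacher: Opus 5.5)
Your proof is correct and follows essentially the same route as the paper: write $Q_q(T)=(T-\Rea(q)\Id)^2+|\Ima(q)|^2\Id$, obtain $\langle x,Q_q(T)x\rangle\ge|\Ima(q)|^2\|x\|^2$, use Cauchy--Schwarz to get $\|Q_q(T)x\|\ge|\Ima(q)|^2\|x\|$, and conclude surjectivity from self-adjointness via $\ran(B)^\perp=\ker(B)=\{0\}$. The only difference is cosmetic: you spell out $\langle x,A^2x\rangle=\|Ax\|^2$ and the quaternionic facts used ($\ran(B)^\perp=\ker(B^*)$ and the projection theorem), whereas the paper leaves these implicit.
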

\begin{proof}
We refer also to \cite[Theorem 2.11(ii)]{AlpayColomboKimsey2016} for
this fact. One has
$$Q_q(T)=(T-\Rea(q)\Id)^2+|\Ima(q)|^2\Id.$$ Given $q\notin\RR$, set
$B=Q_q(T)$ and $c=|\Ima(q)|^2>0$. Then $$B=B^* ~~\hbox{and}~~
\langle Bx,x\rangle\ge c\|x\|^2,$$ so Cauchy-Schwarz yields
$$\|Bx\|\ge c\|x\|.$$ Consequently $B$ is injective with closed range,
while $\ran(B)^\perp=\ker(B)=\{0\}$ shows that it is also surjective.
\end{proof}
\begin{lemma}
\label{lem:hierarchy}
If $T=T^*\in\BB(\HS)$, then
\[
 \sigwS(T)=\sigeS(T)=\sigeuS(T)=\sigedS(T)=\sigwuS(T)=\sigwdS(T).
\]
\end{lemma}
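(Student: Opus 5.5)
The plan is to show that for each $q\in\HH$ the operator $A:=Q_q(T)$ is self-adjoint. Then membership of $A$ in any of the classes $\Phi_+(\HS)$, $\Phi_-(\HS)$, $\Phi(\HS)$, $\Phi_0(\HS)$ is one and the same condition, and all six sets coincide pointwise.

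\textbf{Step 1: $A$ is self-adjoint.} Since $Q_q(T)=T^2-2\Rea(q)T+|q|^2\Id$ has real coefficients and $T=T^*$, we have $Q_q(T)^*=(T^*)^2-2\Rea(q)T^*+|q|^2\Id=Q_q(T)$. Here we use that the adjoint is additive, reverses products, and commutes with multiplication by real scalars. Real scalars are central in $\HH$, so $r\Id$ is right linear with adjoint $r\Id$.

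\textbf{Step 2: the four Fredholm classes coincide on $A$.} For $A=A^*$ we have $\ker(A^*)=\ker(A)$. Hence
\[
 A\in\Phi_+(\HS)\iff \ran(A)\text{ closed and }\dimH\ker(A)<\infty\iff A\in\Phi_-(\HS),
\]
and so $A\in\Phi_+(\HS)\iff A\in\Phi(\HS)$. In that case $\ind(A)=\dimH\ker(A)-\dimH\ker(A^*)=0$, so $A\in\Phi_0(\HS)$. The reverse inclusion $\Phi_0(\HS)\subset\Phi(\HS)$ is trivial.

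\textbf{Step 3: the six spectra coincide.} Fix $q$. By Step 2:
\begin{itemize}
 \item $q\notin\sigeuS(T)\iff q\notin\sigedS(T)\iff q\notin\sigeS(T)\iff q\notin\sigwS(T)$;
 \item $q\notin\sigwuS(T)$ means $A\in\Phi_+(\HS)$ and $\ind(A)\le0$. Since $\Phi_+$ membership already forces $A$ to be Fredholm of index $0$, this is equivalent to $A\in\Phi_+(\HS)$;
 \item symmetrically, $q\notin\sigwdS(T)$ is equivalent to $A\in\Phi_-(\HS)$.
\end{itemize}
Combining these, all six complements coincide, which gives the lemma.

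\textbf{Main obstacle.} The only point needing care is the index of a semi-Fredholm operator in $\sigwuS$ and $\sigwdS$. It is defined through $\ker(A)$ and $\ker(A^*)$, and for $A=A^*$ both are the same finite-dimensional space, so $\ind(A)=0$ without further work. I would also state explicitly that the identity $\ker(A^*)=\ran(A)^\perp$ is not needed, since the definitions here are phrased directly via $\ker(A^*)$. This keeps the argument within (A1)--(A2), without using (A3).
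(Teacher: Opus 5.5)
Your proposal is correct and is essentially the paper's own argument. Because $Q_q(T)$ has real coefficients and $T=T^*$, the operator $A=Q_q(T)$ is self-adjoint, so $\ker(A)=\ker(A^*)$; this makes the $\Phi_+$ and $\Phi_-$ conditions identical, forces $\ind(A)=0$ whenever $A$ is semi-Fredholm, and thus collapses all six spectra (the paper merely orders it as $\sigwS(T)=\sigeS(T)$ first and the upper/lower variants afterwards, while you compare complements pointwise in one pass).
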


\begin{proof}
The inclusion $\sigeS(T)\subseteq\sigwS(T)$ was already noted.
Conversely, let $q\notin\sigeS(T)$, so that $A=Q_q(T)$ is Fredholm; as
$Q_q$ has real coefficients and $T=T^*$, we have $A=A^*$, hence
$$\ker(A)=\ker(A^*) ~~\hbox{and} ~~\ind(A)=0.$$ Thus $$A\in\Phi_0(\HS)~~\hbox{and} ~~q\notin\sigwS(T),$$ this proves $$\sigwS(T)=\sigeS(T).$$
The same observation, namely that $A=Q_q(T)=A^*$ for every $q\in\HH$
whenever $T=T^*$, disposes of the remaining four spectra in one go.
First, since $\ker(A)=\ker(A^*)$, the two conditions defining
$\Phi_+(\HS)$ and $\Phi_-(\HS)$ closed range together with
$\dimH\ker(A)<\infty$ and $\dimH\ker(A^*)<\infty$, respectively,
become identical, so $A\in\Phi_+(\HS)$ precisely when
$A\in\Phi_-(\HS)$, equivalently $$\sigeuS(T)=\sigedS(T),$$ and because
$\sigeS(T)=\sigeuS(T)\cup\sigedS(T)$ by definition, the three sets
coincide $$\sigeuS(T)=\sigedS(T)=\sigeS(T).$$ Second, whenever
$Q_q(T)\in\Phi_+(\HS)$ and therefore $Q_q(T)\in\Phi_-(\HS)$ by the
foregoing, so that $Q_q(T)\in\Phi(\HS)$ the identity
$\ker(A)=\ker(A^*)$ entails
$$\ind(A)=\dimH\ker(A)-\dimH\ker(A^*)=0.$$ Consequently the extra clauses
``$\ind(Q_q(T))>0$'' and ``$\ind(Q_q(T))<0$'' appearing in the
definitions of $\sigwuS(T)$ and $\sigwdS(T)$ are never activated once
$Q_q(T)\in\Phi_+(\HS)$, respectively $Q_q(T)\in\Phi_-(\HS)$, thus
$$\sigwuS(T)=\sigeuS(T) ~~\hbox{and}~~ \sigwdS(T)=\sigedS(T).$$ Together with the
first part, this yields
$\sigwuS(T)=\sigwdS(T)=\sigeS(T)=\sigwS(T)$, and the chain of
equalities is complete.
\end{proof}

\section{The spectral theorem and spectral projections}
\label{sec:spectral}

Throughout this section, $T=T^*\in\BB(\HS)$. By Proposition
\ref{prop:real-spectrum}, $\sigS(T)$ is a nonempty compact subset of
$\RR$, and $[\lambda]=\{\lambda\}$ for each $\lambda\in\sigS(T)$.
Recall that $\Bor(\RR)$ denotes the Borel $\sigma$-algebra of $\RR$,
and that a projection-valued measure on $(\RR,\Bor(\RR))$ with
values in $\BB(\HS)$ is a map $E:\Bor(\RR)\to\BB(\HS)$ satisfying the
following: each $E(B)$ is an orthogonal projection, $$E(\RR)=\Id,$$
$$E(B_1\cap B_2)=E(B_1)E(B_2)$$ for all $B_1,B_2\in\Bor(\RR)$ and $E$
is countably additive in the strong operator topology, that is,
$$E\big(\bigcup_nB_n\big)x=\sum_nE(B_n)x$$ for every $x\in\HS$ and every
sequence of pairwise disjoint sets $(B_n)\subset\Bor(\RR)$. Its
support $\supp(E)$ is defined as the smallest closed
$F\subset\RR$ with $E(\RR\setminus F)=0$, or equivalently as the
complement of the union of all open $O\subset\RR$ with $E(O)=0$.

\begin{theorem}[{Spectral theorem; \cite[Corollary~11.2.2(ii)]{ColomboGantnerKimsey2018},
see also \cite{GhiloniMorettiPerotti2013,AlpayColomboKimsey2016}}]
Let $T=T^*\in\BB(\HS)$. There is a unique projection-valued measure
$E:\Bor(\RR)\to\BB(\HS)$, with values orthogonal projections, such
that $$T=\int_\RR t\,dE(t),$$ it satisfies $\supp(E)=\sigS(T)$. For
every bounded Borel $f:\RR\to\RR$ the operator $$f(T)=\int_\RR f\,dE$$
is bounded self-adjoint, $f\mapsto f(T)$ is a unital homomorphism of
real algebras, and
\begin{equation}
 \|f(T)x\|^2=\int_\RR|f|^2\,d\mu_x,
 \qquad
 \mu_x(B)=\langle E(B)x,x\rangle=\|E(B)x\|^2 .
 \label{eq:scalar}
\end{equation}
\end{theorem}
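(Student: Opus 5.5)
This theorem is cited from \cite[Corollary~11.2.2(ii)]{ColomboGantnerKimsey2018}, so the plan is to reduce it to the normal-operator spectral theorem proved there and in \cite{AlpayColomboKimsey2016}. The adaptation is to the real-line, self-adjoint situation. The approach is to fix an imaginary unit $I\in\sphere$ and a left scalar multiplication as in (A3) commuting suitably with $T$. Then $\HS$ becomes a complex Hilbert space $\HS_I$ over $\CC_I$, on which $T$ is a bounded self-adjoint $\CC_I$-linear operator. Alternatively, since $T$ is self-adjoint one can avoid the complex structure altogether: work in the real Hilbert space underlying $\HS$, with real inner product $\Rea\langle x,y\rangle$. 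There $T$ is a bounded symmetric real operator, and one takes its real spectral measure. I would take the second route, since everything in the statement has real coefficients.

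The key steps are as follows. \emph{(i)} Run the classical continuous functional calculus for real polynomials $p(T)$. The identity $\|p(T)\|=\sup_{\sigS(T)}|p|$ follows from the $C^*$-identity $\|A\|^2=\|A^2\|$ for $A=A^*$, together with the observation that $\lambda\in\sigS(T)$ iff $T-\lambda\Id$ is not invertible (Lemma \ref{lem:square}, using $Q_\lambda(T)=(T-\lambda\Id)^2$). This identifies the relevant spectrum with $\sigS(T)\subset\RR$ (Proposition \ref{prop:real-spectrum}). \emph{(ii)} For each $x$, the positive functional $f\mapsto\Rea\langle f(T)x,x\rangle$ on $C(\sigS(T),\RR)$ yields, via Riesz--Markov, a measure $\mu_x$. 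Polarization gives real bilinear forms, hence bounded operators $f(T)$ for bounded Borel $f$, by the standard monotone class extension.

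\emph{(iii)} Check that each $f(T)$ is \emph{right $\HH$-linear}, not merely real linear. This holds for polynomials and is preserved under weak limits. The reason is that $x\mapsto xq$ is a real-linear isometry commuting with $T$, and for unit $q$ it is unitary in the real inner product. Hence $\mu_{xq}=\mu_x$, and the forms transform correctly. Set $E(B)=\chi_B(T)$. Multiplicativity of the calculus gives idempotence and $E(B_1\cap B_2)=E(B_1)E(B_2)$, and symmetry gives orthogonality. Countable additivity is strong by dominated convergence in \eqref{eq:scalar}. \emph{(iv)} Establish the support. If an open $O$ meets $\rhoS(T)$ only, then $E(O)=0$, since $\mu_x$ lives on $\sigS(T)$. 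Conversely, if $E(O)=0$ with $\lambda\in O$, then $(t-\lambda)^{-1}\chi_{\RR\setminus O}$ is bounded and provides an inverse of $T-\lambda\Id$. \emph{(v)} Uniqueness follows because $E$ determines and is determined by the moments $\langle T^nx,x\rangle$, and real polynomials are dense in $C(\sigS(T))$.

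The main obstacle I expect is step (iii): ensuring that the measure built from the underlying real (or complex) structure produces right $\HH$-linear projections, with $\mu_x(B)=\langle E(B)x,x\rangle$ quaternion-valued yet real. I would handle it by showing that every operator in the weak closure of the real polynomials in $T$ commutes with right multiplications. Realness of $\langle E(B)x,x\rangle$ then follows from $E(B)$ being a positive self-adjoint quaternionic operator.
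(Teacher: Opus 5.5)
Your proposal is correct, but it takes a different route from the paper. The paper gives no proof here: it imports the result as the self-adjoint case of the spectral theorem for normal operators on the $S$-spectrum from the cited literature. There the spectral measure comes from genuinely quaternionic machinery; in Ghiloni--Moretti--Perotti, for instance, an imaginary operator $J$ commuting with $T$ and a reduction to a complex slice of $\HS$.

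You instead realify. $(\HS,\Rea\langle\cdot,\cdot\rangle)$ is a real Hilbert space with the same norm, $T$ is bounded and symmetric on it, and its real spectrum is $\sigS(T)$: a bijective right linear map has a right linear inverse, and then Lemma~\ref{lem:square} and Proposition~\ref{prop:real-spectrum} apply. The only quaternionic input is that $R_q\colon x\mapsto xq$, $|q|=1$, is a real orthogonal operator commuting with $T$, since $\Rea\langle xq,yq\rangle=\Rea(\overline q\langle x,y\rangle q)=\Rea\langle x,y\rangle$. A monotone class argument then makes every bounded Borel $f(T)$ right linear.

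What your route buys is a self-contained, elementary proof that visibly uses neither (A3) nor separability. This supports the paper's claim that Sections~\ref{sec:fredholm}--\ref{sec:main} need only (A1)--(A2). Your discarded first route would not need (A3) either: restricting the right scalars to $\CC_I$ already makes $\HS$ a complex Hilbert space on which $T$ is $\CC_I$-linear, whereas a left multiplication from (A3) need not commute with $T$. What the citation buys is the general normal case, with spectral measures adapted to non-real, spherical $S$-spectra, and the link to the $S$-functional calculus alluded to in Remark~\ref{rem:chi}. Realification cannot deliver that, because it depends on the spectrum being real.

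Three steps should be written out rather than asserted.
\begin{enumerate}
\item \emph{Upgrading real symmetry to quaternionic self-adjointness.} Since $a=\Rea a-\Rea(a\mathbf i)\mathbf i-\Rea(a\mathbf j)\mathbf j-\Rea(a\mathbf k)\mathbf k$ and $\langle x,ye\rangle=\langle x,y\rangle e$, the quaternionic inner product is determined by its real part. Hence a right linear, real-symmetric operator is self-adjoint, and the real orthogonality of $E(B)$ becomes quaternionic orthogonality. Your last sentence presupposes this.
\item \emph{The norm identity in step (i).} To reach $\|p(T)\|=\sup_{\sigS(T)}|p|$ you need the spectral radius formula for the $S$-spectrum and the spectral mapping $\sigS(p(T))=p(\sigS(T))$ for real $p$. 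The latter follows by factoring $p-\mu$ into real linear factors and quadratics $(t-a)^2+b^2$ with $b\neq0$; the quadratic factors are invertible by the argument of Proposition~\ref{prop:real-spectrum}.
\item \emph{Uniqueness.} A competing measure $F$ with $T=\int t\,dF$ must first be shown to be carried by $[-\|T\|,\|T\|]$. If $x\in\ran F((\|T\|+\varepsilon,\infty))$, then $\langle Tx,x\rangle\ge(\|T\|+\varepsilon)\|x\|^2$, and symmetrically on the left. Only then does Weierstrass density apply to the moments $\langle T^nx,x\rangle$.
\end{enumerate}
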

Each $\mu_x$ is a finite positive real measure, $E(B)$ being an
orthogonal projection.

\begin{lemma}
\label{lem:eigenspace}
For every $\lambda\in\RR$, $\ran(E(\{\lambda\}))=\ker(T-\lambda\Id)$.
\end{lemma}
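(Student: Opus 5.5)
The plan is to prove both inclusions directly from the functional calculus and the scalar identity \eqref{eq:scalar}, using only the real-valued bounded Borel function $f(t)=t-\lambda$ restricted to a neighbourhood of $\sigS(T)$. Since $\supp(E)=\sigS(T)$ is compact, I would replace $f$ by the bounded function $f(t)=(t-\lambda)\chi_K(t)$ with $K$ a compact interval containing $\sigS(T)\cup\{\lambda\}$. Because $E(\RR\setminus K)=0$, this does not change $f(T)$, and since $T=\int t\,dE$ we get $f(T)=T-\lambda\Id$.

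For $\ran(E(\{\lambda\}))\subseteq\ker(T-\lambda\Id)$, take $x=E(\{\lambda\})x$. Then $\mu_x(B)=\|E(B)E(\{\lambda\})x\|^2=\|E(B\cap\{\lambda\})x\|^2$, using the multiplicativity $E(B_1\cap B_2)=E(B_1)E(B_2)$. Hence $\mu_x$ is concentrated on $\{\lambda\}$. By \eqref{eq:scalar},
\[
\|(T-\lambda\Id)x\|^2=\int_\RR|t-\lambda|^2\,d\mu_x(t)=0 .
\]
So $x\in\ker(T-\lambda\Id)$.

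For the reverse inclusion, take $x$ with $Tx=x\lambda$, equivalently $(T-\lambda\Id)x=0$ (here $\lambda$ is real, so this is right linear). Then \eqref{eq:scalar} gives
\[
\int_\RR|t-\lambda|^2\,d\mu_x(t)=0 .
\]
The integrand is strictly positive on $\RR\setminus\{\lambda\}$ and $\mu_x$ is a positive measure, so $\mu_x(\RR\setminus\{\lambda\})=0$. This means $\|E(\RR\setminus\{\lambda\})x\|^2=0$. By finite additivity and $E(\RR)=\Id$,
\[
x=E(\{\lambda\})x+E(\RR\setminus\{\lambda\})x=E(\{\lambda\})x\in\ran(E(\{\lambda\})) .
\]

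The only point needing care, and the expected obstacle, is the legitimacy of applying \eqref{eq:scalar} to the identity function, which is unbounded on $\RR$. I would handle it by the truncation to $K$ described above, using that $f\mapsto f(T)$ is a unital homomorphism of real algebras. Then $f(T)=\int(t-\lambda)\chi_K\,dE=T\cdot E(K)-\lambda E(K)=T-\lambda\Id$, since $E(K)=\Id$. Throughout, no left scalar multiplication (A3) is needed, because all functions involved are real-valued.
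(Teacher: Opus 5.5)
Your proposal is correct and follows the paper's argument: both inclusions come from \eqref{eq:scalar} with $f(t)=t-\lambda$. The forward inclusion holds because $\mu_x$ is carried by $\{\lambda\}$, and the reverse holds because the integrand $|t-\lambda|^2$ vanishes only at $\lambda$, which forces $E(\RR\setminus\{\lambda\})x=0$. Your truncation to a compact $K$ adds a small justification the paper leaves implicit, since \eqref{eq:scalar} is stated only for bounded $f$.
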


\begin{proof}
Let $E(\{\lambda\})x=x$ then
$$\mu_x(\RR\setminus\{\lambda\})=\|x-E(\{\lambda\})x\|^2=0,$$ so
\eqref{eq:scalar} with $f(t)=t-\lambda$ gives
$$\|(T-\lambda\Id)x\|^2=0.$$ Conversely $(T-\lambda\Id)x=0$ gives
$$\int|t-\lambda|^2d\mu_x=0.$$ The integrand is nonnegative and vanishes
only at $\lambda$, so $$\mu_x(\RR\setminus\{\lambda\})=0 ~~\hbox{and}~~
\|x-E(\{\lambda\})x\|^2=\|E(\RR\setminus\{\lambda\})x\|^2=0.$$
\end{proof}

The next lemma is the technical core of the paper.

\begin{lemma}
\label{lem:restriction}
Let $\Delta\subset\sigS(T)$ be nonempty, compact and open in
$\sigS(T)$, and put $$\Delta'=\sigS(T)\setminus\Delta,
\HS_1=\ran(E(\Delta)), \HS_2=\ker(E(\Delta)).$$ Then $\HS_1,\HS_2$
are closed, mutually orthogonal, $T$-invariant,
$\HS=\HS_1\oplus\HS_2$, and
\[
 \sigS(T|_{\HS_1})=\Delta,\qquad \sigS(T|_{\HS_2})=\Delta' .
\]
Moreover $\HS_1\neq\{0\}$, and
$\dist(\sigS(T|_{\HS_1}),\sigS(T|_{\HS_2}))>0$ when
$\Delta'\neq\varnothing$.
\end{lemma}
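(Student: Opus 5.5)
The plan is to use the functional calculus of the spectral theorem throughout. First, $E(\Delta)$ is an orthogonal projection, so $\HS_1=\ran(E(\Delta))$ and $\HS_2=\ker(E(\Delta))=\ran(\Id-E(\Delta))$ are closed, mutually orthogonal, and satisfy $\HS=\HS_1\oplus\HS_2$.

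For invariance, note that $T=\int t\,dE$ and $E(\Delta)=\chi_\Delta(T)$ are both images of bounded Borel functions under the homomorphism $f\mapsto f(T)$. Since $t\cdot\chi_\Delta(t)=\chi_\Delta(t)\cdot t$, they commute, and hence $T$ leaves both $\ran(E(\Delta))$ and $\ker(E(\Delta))$ invariant. Lemma~\ref{lem:square} then gives $\sigS(T)=\sigS(T|_{\HS_1})\cup\sigS(T|_{\HS_2})$. Both restrictions are self-adjoint, because orthogonal reducing subspaces preserve the adjoint, so both $S$-spectra are real by Proposition~\ref{prop:real-spectrum}.

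The core step is to show $\sigS(T|_{\HS_1})\subset\Delta$ and $\sigS(T|_{\HS_2})\subset\overline{\RR\setminus\Delta}\cap\sigS(T)=\Delta'$. The last equality uses that $\Delta$ is open in $\sigS(T)$ and hence relatively clopen. Take real $\lambda\notin\Delta$. Since $\Delta$ is compact, $d=\dist(\lambda,\Delta)>0$. Put $g(t)=\chi_\Delta(t)/(t-\lambda)$, which is bounded by $1/d$. The homomorphism property gives
\[
(T-\lambda\Id)\,g(T)=g(T)\,(T-\lambda\Id)=\chi_\Delta(T)=E(\Delta),
\]
so $g(T)|_{\HS_1}$ inverts $(T-\lambda\Id)|_{\HS_1}$. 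Therefore $Q_\lambda(T|_{\HS_1})=((T-\lambda\Id)|_{\HS_1})^2$ is invertible, and $\lambda\notin\sigS(T|_{\HS_1})$. The same argument, applied to $\lambda\notin\Delta'$ with $\chi_{\RR\setminus\Delta}$, works because $E(\RR\setminus\Delta)=E(\Delta')$, as $\supp E=\sigS(T)$. Here $\Delta'$ is also compact: it is closed in the compact set $\sigS(T)$, being the complement of a relatively open set. Combining these inclusions with the union formula and the disjointness $\Delta\cap\Delta'=\varnothing$ forces equality, $\sigS(T|_{\HS_1})=\Delta$ and $\sigS(T|_{\HS_2})=\Delta'$.

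One subtlety must be handled. If $\HS_2=\{0\}$, its $S$-spectrum is empty by convention and $\Delta'$ must then be empty as well; this follows because $E(\Delta')=0$ together with $\Delta'$ being relatively open in the support forces $\Delta'=\varnothing$. For $\HS_1\neq\{0\}$, argue as follows. $\Delta$ is nonempty and open in $\sigS(T)=\supp(E)$, so $\Delta=O\cap\sigS(T)$ for some open $O\subset\RR$. Since $O$ meets the support, $E(O)\neq0$. Also $E(O\setminus\sigS(T))=0$, and so $E(\Delta)=E(O)\neq0$. Finally, the positive distance between the two spectra is the distance between the disjoint compact sets $\Delta$ and $\Delta'$.

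I expect the main care point to be the inclusion step. One must check that $g(T)$ really restricts to an operator on $\HS_1$, which holds because it commutes with $E(\Delta)$. One must also correctly invoke the support property to identify $E(\RR\setminus\Delta)$ with $E(\Delta')$, and handle the degenerate cases in which $\HS_2$ is trivial.
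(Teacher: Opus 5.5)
Your proof is correct. For the inclusion $\Delta\subset\sigS(T|_{\HS_1})$, and its analogue for $\Delta'$, you take a different route from the paper.

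The paper argues locally in two steps:
\begin{itemize}
\item Step~1 inverts $Q_q(T|_{\HS_1})$ for every $q\notin\Delta$, real or not. It uses $\psi=\varphi_q^{-1}\chi_\Delta$ with $\varphi_q(t)=(t-\Rea(q))^2+|\Ima(q)|^2$.
\item Step~2 shows that each $q\in\Delta$ is an approximate eigenvalue of the restriction. It takes unit vectors in $\ran(E((q-\varepsilon,q+\varepsilon)\cap\Delta))\subset\HS_1$, which is nonzero because $q\in\supp(E)$.
\end{itemize}

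You keep a version of Step~1 (the linear factor $t-\lambda$ for real $\lambda$, then squaring). You replace Step~2 by a bookkeeping argument. The two upper inclusions, the union formula of Lemma~\ref{lem:square}, and the facts $\Delta\cap\Delta'=\varnothing$ and $\Delta\cup\Delta'=\sigS(T)$ together force both equalities.

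This is shorter and purely algebraic, and it makes several of your side arguments redundant:
\begin{itemize}
\item The union formula already gives $\sigS(T|_{\HS_j})\subset\sigS(T)\subset\RR$, so you need not check that the restrictions are self-adjoint.
\item $\HS_1\neq\{0\}$ follows from $\varnothing\neq\Delta\subset\sigS(T|_{\HS_1})$.
\item The case $\HS_2=\{0\}$ is covered by $\Delta'\subset\sigS(T|_{\HS_2})$.
\end{itemize}
The paper's route, in exchange, identifies $\sigS(T|_{\HS_1})$ without using the complementary piece at all. It also gives the quantitative fact that $Q_q(T|_{\HS_1})$ is not bounded below at each $q\in\Delta$. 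That approximate-eigenvector mechanism reappears in Corollary~\ref{thm:weyl-criterion}.

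Two slips of wording should be fixed.
\begin{itemize}
\item The identity $\overline{\RR\setminus\Delta}\cap\sigS(T)=\Delta'$ is false in general. If $\Delta=\{\lambda\}$ with $\lambda$ isolated, then $\overline{\RR\setminus\Delta}=\RR$. You presumably mean $\overline{\sigS(T)\setminus\Delta}=\Delta'$, and your argument proves the inclusion into $\Delta'$ directly anyway.
\item For real $\lambda\notin\Delta'$, the function to use is $g(t)=\chi_{\Delta'}(t)/(t-\lambda)$, which is bounded by $1/\dist(\lambda,\Delta')$. Do not use $\chi_{\RR\setminus\Delta}(t)/(t-\lambda)$: it is unbounded, for instance, when $\lambda\in\RR\setminus\sigS(T)$, since $\lambda$ then lies in the open set $\RR\setminus\Delta$. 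The identity $\chi_{\Delta'}(T)=E(\Delta')=E(\RR\setminus\Delta)=\Id-E(\Delta)$ is what makes $g(T)|_{\HS_2}$ an inverse of $(T-\lambda\Id)|_{\HS_2}$.
\end{itemize}
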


\begin{proof}
The operator \(E(\Delta)\) is an orthogonal projection commuting with \(T\). Hence
\[
\HS_1=\ran E(\Delta),\qquad \HS_2=\ran E(\Delta')
\]
are closed, orthogonal, \(T\)-invariant, and \(\HS=\HS_1\oplus\HS_2\). Since \(\Delta\) is compact and closed in \(\sigS(T)\), its complement \(\Delta'\) is compact and open in \(\sigS(T)\). Also \(E\) is carried by $$\supp(E)=\sigS(T),$$ so
\[
E(\RR\setminus\Delta)=E(\Delta'),
\]
and therefore \(\HS_2=\ran(E(\Delta'))\). For \(q\in\CC\), define
\[
\varphi_q(t)=t^2-2\Rea(q)t+|q|^2
=\bigl(t-\Rea(q)\bigr)^2+|\Ima(q)|^2 .
\]
Then \(Q_q(T)=\varphi_q(T)\) in the Borel functional calculus. We treat \(\Delta\), the argument for \(\Delta'\) is identical. If \(\Delta'=\varnothing\), then \(\HS_2=\{0\}\) and \(\sigS(T|_{\HS_2})=\varnothing\).

\emph{Step 1: \(q\notin\Delta\) implies \(q\in\rhoS(T|_{\HS_1})\).}
If \(q\notin\RR\), then \(\varphi_q\ge |\Ima(q)|^2>0\). If \(q\in\RR\) and \(q\notin\Delta\), then
\[
\varphi_q(t)=(t-q)^2\ge \dist(q,\Delta)^2>0
\]
on the compact set \(\Delta\). Thus in either case
\[
\delta:=\inf_{\Delta}\varphi_q>0.
\]
Hence \(\psi:=\varphi_q^{-1}\chi_\Delta\) is bounded Borel with \(\|\psi\|_\infty\le\delta^{-1}\), and
\[
\varphi_q\psi=\chi_\Delta
\]
gives
\[
Q_q(T)\,\psi(T)=\psi(T)\,Q_q(T)=E(\Delta).
\]
Both operators preserve \(\HS_1\), and \(E(\Delta)|_{\HS_1}=\Id_{\HS_1}\). Therefore \(\psi(T)|_{\HS_1}\) inverts \(Q_q(T|_{\HS_1})\), so \(q\in\rhoS(T|_{\HS_1})\).

\emph{Step 2: \(q\in\Delta\) implies \(q\in\sigS(T|_{\HS_1})\).}
Such a \(q\) is real. Let \(\varepsilon>0\) and
\[
B_\varepsilon=(q-\varepsilon,q+\varepsilon)\cap\Delta .
\]
Since \(\Delta\) is only relatively open in \(\sigS(T)\), write \(\Delta=W\cap\sigS(T)\) with \(W\subset\RR\) open. Then
\[
O=(q-\varepsilon,q+\varepsilon)\cap W
\]
is open in \(\RR\) and contains \(q\in\supp(E)\). Hence \(E(O)\neq0\). Moreover,
\[
E(O)=E(O\cap\sigS(T))=E(B_\varepsilon),
\]
because \(E\) is carried by \(\supp(E)\). Choose a unit vector
\[
x\in\ran(E(B_\varepsilon))\subset\HS_1 .
\]
Then \(\mu_x\) is carried by \(B_\varepsilon\), and on \(B_\varepsilon\),
\[
|\varphi_q(t)|=(t-q)^2<\varepsilon^2 .
\]
Thus
\[
\|Q_q(T|_{\HS_1})x\|^2\le\varepsilon^4
\]
by \eqref{eq:scalar}. Since \(\varepsilon>0\) is arbitrary, \(Q_q(T|_{\HS_1})\) is not bounded below. Hence \(q\in\sigS(T|_{\HS_1})\).

Steps 1 and 2 yield
\[
\sigS(T|_{\HS_1})=\Delta .
\]
Applying the same argument to \(\Delta'\) gives
\[
\sigS(T|_{\HS_2})=\Delta' .
\]
Since \(\Delta\neq\varnothing\), Step 2 supplies a unit vector in \(\HS_1\), so \(\HS_1\neq\{0\}\). Finally, \(\Delta\) and \(\Delta'\) are disjoint compact sets.
\end{proof}

\section{Weyl's theorem}
\label{sec:main}

\emph{Here $T=T^*\in\BB(\HS)$, $\dimH\HS=\infty$, and $E$ is the
spectral measure of $T$.}

\begin{lemma}
\label{lem:fredholm}
For $\lambda\in\RR$: $\lambda\notin\sigeS(T)$ if and only if
$T-\lambda\Id$ is Fredholm. If $T-\lambda\Id$ is Fredholm, then either
$\lambda\in\rhoS(T)$ or $\lambda$ is an isolated point of $\sigS(T)$.
\end{lemma}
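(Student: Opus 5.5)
For the first assertion I will use that for real $\lambda$ one has $Q_\lambda(T)=(T-\lambda\Id)^2$, so that $\lambda\notin\sigeS(T)$ means exactly that $(T-\lambda\Id)^2\in\Phi(\HS)$. By \eqref{eq:calkin} (Atkinson), this is equivalent to the invertibility of $\pi\big((T-\lambda\Id)^2\big)=\pi(T-\lambda\Id)^2$ in the real unital algebra $\Calk(\HS)$. Lemma \ref{lem:square}, applied in $\Calk(\HS)$, shows that this holds if and only if $\pi(T-\lambda\Id)$ is invertible. A second application of Atkinson's theorem, now to the operator $T-\lambda\Id$ rather than to an $S$-resolvent, turns this into the statement that $T-\lambda\Id$ is Fredholm. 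The only thing to check is that the quaternionic Atkinson theorem of \cite{MuraleetharanThirulogasanthar2018} is stated for arbitrary $A\in\BB(\HS)$: $A\in\Phi(\HS)$ if and only if $\pi(A)$ is invertible. I will take this as given.

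For the second assertion, set $A=T-\lambda\Id$, which is self-adjoint and Fredholm, and write $N=\ker A$, which is finite-dimensional. Since $A=A^*$, we have $\ran A=N^\perp$ and this range is closed. By Lemma \ref{lem:eigenspace}, $N=\ran(E(\{\lambda\}))$, so $N$ reduces $T$. The restriction $A|_{N^\perp}:N^\perp\to N^\perp$ is then a bijective bounded operator, hence bounded below: there is $c>0$ with $\|Ax\|\ge c\|x\|$ for all $x\perp N$.

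I claim that $(\lambda-c,\lambda+c)\setminus\{\lambda\}$ contains no point of $\sigS(T)=\supp(E)$. Suppose some $t_0\ne\lambda$ with $|t_0-\lambda|<c$ lies in the support. Choose a small interval $B\ni t_0$ that avoids $\lambda$ and satisfies $\sup_{t\in B}|t-\lambda|<c$. Then $E(B)\neq0$, and a unit vector $x\in\ran E(B)$ satisfies $E(\{\lambda\})x=E(B\cap\{\lambda\})x=0$, so $x\perp N$. By \eqref{eq:scalar},
\[
\|Ax\|^2=\int_B|t-\lambda|^2\,d\mu_x<c^2,
\]
which contradicts the lower bound. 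Hence $\lambda$ is either not in $\sigS(T)$, that is, $\lambda\in\rhoS(T)$, or it is an isolated point of $\sigS(T)$.

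The step that needs the most care is the translation from the Calkin algebra back to operators in the first assertion. Lemma \ref{lem:square} does the algebraic work there, but I must make sure that Atkinson's theorem is applied in the form covering a general bounded operator. The isolation argument itself is routine once the reducing decomposition coming from Lemma \ref{lem:eigenspace} is in hand.
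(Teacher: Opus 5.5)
Your proof is correct. The first assertion follows the paper exactly: you use $Q_\lambda(T)=(T-\lambda\Id)^2$, apply Lemma~\ref{lem:square} in $\Calk(\HS)$, and apply Atkinson's theorem to both $Q_\lambda(T)$ and $T-\lambda\Id$. The paper relies on the general form of Atkinson's theorem in the same way.

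For the isolation statement you take a different route. The paper's argument is purely operator-theoretic. With $A_0=A|_{\ker(A)^\perp}$ invertible and $c=\|A_0^{-1}\|^{-1}$, a Neumann series makes $A_0-t\Id$, hence $Q_t(A_0)$, invertible for real $|t|<c$. Lemma~\ref{lem:square} applied to the invariant splitting $\ker(A)\oplus\ker(A)^\perp$, together with $\sigS(A)\subset\RR$ and the shift $\sigS(A)=\sigS(T)-\lambda$, then gives $\sigS(A)\subset\{0\}\cup(\RR\setminus(-c,c))$.

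You also work with a lower bound $c$ for $A$ on $\ker(A)^\perp$ (the best such bound is the paper's $\|A_0^{-1}\|^{-1}$), but you obtain the gap from the spectral measure. A point of $\supp(E)=\sigS(T)$ in $(\lambda-c,\lambda+c)\setminus\{\lambda\}$ yields, via Lemma~\ref{lem:eigenspace}, a unit vector $x\perp\ker(A)$ with $\|Ax\|<c$. This is the same test-vector mechanism as Step~2 of Lemma~\ref{lem:restriction}.

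What each route buys:
\begin{itemize}
\item Your version avoids the Neumann series, Lemma~\ref{lem:square} and the spectral shift. In exchange it relies on the spectral theorem, in particular $\supp(E)=\sigS(T)$.
\item The paper's version does not use the spectral measure. It needs only closed range, self-adjointness, Lemma~\ref{lem:square} and Proposition~\ref{prop:real-spectrum}.
\item Neither argument uses $\dimH\ker(T-\lambda\Id)<\infty$ for this part; only closedness of the range matters.
\end{itemize}

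One minor point: take $B=(t_0-\delta,t_0+\delta)$ with $\delta<\min\{|t_0-\lambda|,\,c-|t_0-\lambda|\}$. Then $E(B)\neq 0$ follows directly from the definition of the support, $B$ avoids $\lambda$, and $\sup_B|t-\lambda|<c$.
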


\begin{proof}
Since \(Q_\lambda(T)=(T-\lambda\Id)^2\), we have
\(\pi(Q_\lambda(T))=\pi(T-\lambda\Id)^2\). By Lemma \ref{lem:square},
the first of these is invertible in \(\Calk(\HS)\) exactly when the
second is. Combining \eqref{eq:calkin} with Atkinson's theorem
\cite{MuraleetharanThirulogasanthar2018}, this gives the first
assertion.

For the second assertion, set \(A=T-\lambda\Id=A^*\). Because \(A\) is
Fredholm, its kernel is finite-dimensional and its range is closed,
moreover
$$\ran(A)=\ker(A^*)^\perp=\ker(A)^\perp.$$ Since
\(A(\HS)=A(\ker(A)^\perp)\), it follows that the restricted operator
\(A_0=A|_{\ker(A)^\perp}\) maps \(\ker(A)^\perp\) bijectively onto
itself, and therefore \(A_0\) is boundedly invertible.

Let \(c=\|A_0^{-1}\|^{-1}>0\). If \(t\in\RR\) satisfies \(|t|<c\), then
$$A_0-t\Id=A_0(\Id-tA_0^{-1})$$ is invertible by the Neumann series.
Hence \(Q_t(A_0)=(A_0-t\Id)^2\) is invertible, so
$$t\notin\sigS(A_0).$$ On the other hand, since \(A|_{\ker A}=0\), we
have $$\sigS(A|_{\ker A})\subset\{0\}.$$ Applying Lemma
\ref{lem:square}, we obtain
\[
\sigS(A)\subset\{0\}\cup\bigl(\RR\setminus(-c,c)\bigr),
\]
so \(0\) is either outside \(\sigS(A)\) or isolated in it. Finally, \(A\) and \(T\) are self-adjoint, so their \(S\)-spectra are
real. Since \(Q_t(A)=Q_{\lambda+t}(T)\) for real \(t\), it follows
that $$\sigS(A)=\sigS(T)-\lambda.$$
\end{proof}

\begin{proposition}
\label{prop:criterion}
Let $\lambda\in\sigS(T)$. The following are equivalent:
\begin{enumerate}[label=(\roman*),leftmargin=2.4em]
\item $\lambda\notin\sigeS(T)$,
\item $\lambda\in\sigdS(T)$,
\item $\lambda$ is isolated in $\sigS(T)$ and
$\dimH\ker(T-\lambda\Id)<\infty$,
\item $\lambda$ is isolated in $\sigS(T)$ and the atom
$E(\{\lambda\})$ has finite rank.
\end{enumerate}
\end{proposition}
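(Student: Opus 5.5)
We prove the equivalence via the cycle (iii)$\Leftrightarrow$(iv), (i)$\Rightarrow$(iii), (iii)$\Rightarrow$(ii), (ii)$\Rightarrow$(iv), (iv)$\Rightarrow$(i). This way we never invoke the inclusion \eqref{eq:inclusion}.

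\textbf{(iii)$\Leftrightarrow$(iv) and (i)$\Rightarrow$(iii).} The first equivalence is immediate from Lemma \ref{lem:eigenspace}, since $\ran(E(\{\lambda\}))=\ker(T-\lambda\Id)$. For (i)$\Rightarrow$(iii), Lemma \ref{lem:fredholm} shows that $T-\lambda\Id$ is Fredholm, hence $\dimH\ker(T-\lambda\Id)<\infty$. The same lemma gives that $\lambda$ is isolated in $\sigS(T)$, because $\lambda\in\sigS(T)$ excludes the alternative $\lambda\in\rhoS(T)$.

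\textbf{(iii)$\Rightarrow$(ii).} This is the step I expect to carry the weight. Since $\lambda$ is isolated, $\Delta=\{\lambda\}$ is nonempty, compact and open in $\sigS(T)$. Lemma \ref{lem:restriction} then yields the orthogonal, hence topological, decomposition $\HS=\HS_1\oplus\HS_2$ into closed $T$-invariant submodules with $\HS_1=\ran(E(\{\lambda\}))$. The three conditions of Definition \ref{def:finite-type} are checked as follows.
\begin{enumerate}[label=(\textup{F}\arabic*),leftmargin=3em]
\item $\dimH\HS_1=\dimH\ker(T-\lambda\Id)<\infty$ by Lemma \ref{lem:eigenspace}.
\item The two restricted spectra are $\{\lambda\}$ and $\sigS(T)\setminus\{\lambda\}$, which are disjoint.
\item $\sigS(T|_{\HS_1})=\{\lambda\}=[\lambda]$, because $\lambda$ is real.
\end{enumerate}
The only care needed is that the decomposition be invariant. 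Choosing the spectral projection rather than a Riesz projection is exactly what guarantees this.

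\textbf{(ii)$\Rightarrow$(iv).} Take the decomposition $\HS=\HS_1\oplus\HS_2$ from Definition \ref{def:finite-type}. By Lemma \ref{lem:square}, $\sigS(T)=\{\lambda\}\cup\sigS(T|_{\HS_2})$ with $\lambda\notin\sigS(T|_{\HS_2})$. The set $\sigS(T|_{\HS_2})$ is compact (or empty), so $\lambda$ is isolated in $\sigS(T)$.

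For finiteness, decompose $x\in\ker(T-\lambda\Id)$ as $x=x_1+x_2$. Invariance of both summands gives $(T-\lambda\Id)x_2=0$ in $\HS_2$. Since $\lambda\notin\sigS(T|_{\HS_2})$, the operator $Q_\lambda(T|_{\HS_2})=(T|_{\HS_2}-\lambda)^2$ is invertible, so $T|_{\HS_2}-\lambda$ is injective by Lemma \ref{lem:square}, and hence $x_2=0$. Therefore $\ker(T-\lambda\Id)\subset\HS_1$ is finite-dimensional, and Lemma \ref{lem:eigenspace} converts this into the statement that $E(\{\lambda\})$ has finite rank.

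\textbf{(iv)$\Rightarrow$(i).} Apply Lemma \ref{lem:restriction} with $\Delta=\{\lambda\}$. On $\HS_2$, $\lambda\notin\sigS(T|_{\HS_2})$, so $T|_{\HS_2}-\lambda$ is invertible, again via Lemma \ref{lem:square}. On the finite-dimensional space $\HS_1$, $T-\lambda$ vanishes by Lemma \ref{lem:eigenspace}. Hence $T-\lambda\Id=0\oplus (T|_{\HS_2}-\lambda)$ has kernel $\HS_1$ of finite dimension, closed range $\HS_2$, and finite-dimensional cokernel. It is therefore Fredholm, and Lemma \ref{lem:fredholm} gives $\lambda\notin\sigeS(T)$.
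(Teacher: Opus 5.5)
Your proof is correct and is essentially the paper's. The same four lemmas (\ref{lem:square}, \ref{lem:eigenspace}, \ref{lem:restriction}, \ref{lem:fredholm}) do all the work, and your steps (ii)$\Rightarrow$(iv) and (iv)$\Rightarrow$(i) are the paper's (ii)$\Rightarrow$(iii) and (iii)$\Rightarrow$(i) arguments, arranged as a single cycle rather than as a star around (iii).

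One correction to your commentary: a Riesz projection also commutes with $T$, so invariance is not what the spectral projection secures. What it secures is $\ran(E(\{\lambda\}))=\ker(T-\lambda\Id)$, which gives (F1), while Lemma~\ref{lem:restriction} supplies the spectral splitting needed for (F2) and (F3).
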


\begin{proof}
We take (iii) as the central statement and prove
(iii) $\Leftrightarrow$ (iv), (iii) $\Leftrightarrow$ (i) and
(iii) $\Leftrightarrow$ (ii). The argument relies only on Lemma
\ref{lem:square}, Lemma \ref{lem:eigenspace}, Lemma
\ref{lem:restriction} and Lemma \ref{lem:fredholm}, each of which is
established in Section \ref{sec:prelim}, \ref{sec:spectral} or above
without reference to the present proposition. Consequently, none of
the implications below is used in the proof of any statement on which
it depends.

The equivalence (iii) $\Leftrightarrow$ (iv) is precisely Lemma
\ref{lem:eigenspace}.

Suppose now that (iii) holds, and set
$$\HS_1=\ran(E(\{\lambda\})), \HS_2=\ker(E(\{\lambda\})).$$ Since
$\lambda$ is isolated, Lemma \ref{lem:restriction} applies with
$\Delta=\{\lambda\}$ and yields
$\sigS(T|_{\HS_1})=\{\lambda\}=[\lambda]$ together with
$\sigS(T|_{\HS_2})=\sigS(T)\setminus\{\lambda\}$. These two $S$-spectra
are disjoint, furthermore $\HS_1=\ker(T-\lambda\Id)$ by Lemma
\ref{lem:eigenspace}, it is nonzero by Lemma \ref{lem:restriction},
and it is finite dimensional by hypothesis. Hence the orthogonal
decomposition $\HS=\HS_1\oplus\HS_2$ fulfils (F1)--(F3), which means
$\lambda\in\sigdS(T)$, i.e.\ (ii) holds.

Still under (iii), observe that
$\lambda\notin\sigS(T|_{\HS_2})$, so
$Q_\lambda(T|_{\HS_2})=((T-\lambda\Id)|_{\HS_2})^2$ is invertible;
Lemma \ref{lem:square} then makes $(T-\lambda\Id)|_{\HS_2}$
invertible as well. Because $T-\lambda\Id$ annihilates $\HS_1$, we
obtain
\[
 \ker(T-\lambda\Id)=\HS_1,
 \qquad
 \ran(T-\lambda\Id)=\HS_2 .
\]
Thus $T-\lambda\Id$ has finite-dimensional kernel and closed range of
codimension $\dimH\HS_1<\infty$, so it is Fredholm, and Lemma
\ref{lem:fredholm} gives (i).

Conversely, assume (i). Lemma \ref{lem:fredholm} makes
$T-\lambda\Id$ Fredholm, whence $$\dimH\ker(T-\lambda\Id)<\infty,$$
since $\lambda\in\sigS(T)$, it is isolated. This is (iii).

Finally, assume (ii), and let $\HS=\HS_1\oplus\HS_2$ be a
decomposition as in Definition \ref{def:finite-type}. By Lemma
\ref{lem:square},
$\sigS(T)=\sigS(T|_{\HS_1})\cup\sigS(T|_{\HS_2})$. In view of (F2),
(F3) and Proposition \ref{prop:real-spectrum}, this is the union of
$\{\lambda\}$ with the compact set $\sigS(T|_{\HS_2})$, and the two
are disjoint, hence $\lambda$ is isolated. Moreover, since
$\lambda\notin\sigS(T|_{\HS_2})$, Lemma \ref{lem:square} shows that
$(T-\lambda\Id)|_{\HS_2}$ is injective. Now let $x=x_1+x_2$ with
$x_j\in\HS_j$ satisfy $(T-\lambda\Id)x=0$. The components
$(T-\lambda\Id)x_1\in\HS_1$ and $(T-\lambda\Id)x_2\in\HS_2$ must
vanish separately, so $x_2=0$. Therefore
$\ker(T-\lambda\Id)\subset\HS_1$ is finite dimensional by (F1),
which is (iii).
\end{proof}

\begin{remark}
\label{rem:where-the-work-is}
Among the four equivalences, only (iii) $\Rightarrow$ (ii) is
non-trivial. Its converse (ii) $\Rightarrow$ (iii) is an immediate
consequence of Definition \ref{def:finite-type}, while
(i) $\Leftrightarrow$ (iii) is a classical Fredholm argument in
quaternionic form. One might expect (iii) $\Rightarrow$ (ii) to be
just as straightforward: if $\lambda$ is isolated and
$\dimH\ker(T-\lambda\Id)<\infty$, simply take
$$\HS_1=\ker(T-\lambda\Id) ~~\hbox{and}~~\HS_2=\HS_1^\perp ~~\hbox{in Definition
\ref{def:finite-type}}.$$ Conditions (F1) and (F3) then cause no
difficulty, but (F2) requires $\lambda\notin\sigS(T|_{\HS_2})$, and
this is not furnished by any hypothesis: one must first know that the
orthogonal decomposition associated with $E(\{\lambda\})$ splits the
$S$-spectrum in the required way. This is precisely the content of
Lemma \ref{lem:restriction}, the sole substantive computation of the
paper, and it appears not to be available in the literature. The other
possible approach, namely choosing for $\HS_1$ the range of the Riesz
projection and appealing to
\cite[Theorem 3.10]{BaloudiBelgacemJeribi2022}, would require
$\dimH\ran(P_{\{\lambda\}})<\infty$; however, this does not follow
from $\dimH\ker(T-\lambda\Id)<\infty$ unless one already knows that
$\ran(P_{\{\lambda\}})=\ker(T-\lambda\Id)$, that is, unless one
already has Theorem \ref{thm:riesz}. In fact, Example \ref{ex:jordan}
demonstrates that these two subspaces need not coincide in general.
Thus, in either approach, reversing \eqref{eq:inclusion} depends on
one of the two results established here and cannot be obtained from
\cite{BaloudiBelgacemJeribi2022} merely by rearrangement.
\end{remark}

\begin{theorem}
\label{thm:main}
Let $T=T^*\in\BB(\HS)$ with $\dimH\HS=\infty$. Then the essential
$S$-spectrum of $T$ is given by
\[
 \sigeS(T)=\sigS(T)\setminus\sigdS(T),
\]
and a point $\lambda\in\sigS(T)$ belongs to $\sigeS(T)$ precisely when
either $\lambda$ fails to be isolated in $\sigS(T)$, or the range of
the spectral projection $E(\{\lambda\})$ is infinite dimensional, that
is,
\[
 \sigeS(T)
 =\Big\{\lambda\in\sigS(T):\ \lambda\ \text{is not isolated in}\
 \sigS(T),\ \text{or}\ \dimH\ran\big(E(\{\lambda\})\big)=\infty\Big\}.
\]
Equivalently, the $S$-spectrum of $T$ decomposes as
$\sigS(T)=\sigwS(T)\cup\sigdS(T)$ with
$$\sigS(T)\setminus\sigwS(T)=\sigdS(T).$$
\end{theorem}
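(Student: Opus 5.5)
The theorem is essentially a repackaging of Proposition~\ref{prop:criterion}, together with Lemma~\ref{lem:hierarchy} and the inclusion $\sigeS(T)\subset\sigS(T)$ noted in Section~\ref{sec:prelim}. I will prove the three displayed assertions in order, using only these results and Proposition~\ref{prop:real-spectrum}.

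\textbf{First identity.} Recall that $\sigeS(T)\subset\sigS(T)$ and that $\sigdS(T)\subset\sigS(T)$ by Definition~\ref{def:finite-type}. For $\lambda\in\sigS(T)$, which is real by Proposition~\ref{prop:real-spectrum}, the equivalence (i)$\Leftrightarrow$(ii) of Proposition~\ref{prop:criterion} says that $\lambda\notin\sigeS(T)$ if and only if $\lambda\in\sigdS(T)$. Intersecting with $\sigS(T)$ therefore gives
\[
\sigS(T)\setminus\sigeS(T)=\sigdS(T),
\]
and taking complements inside $\sigS(T)$ gives $\sigeS(T)=\sigS(T)\setminus\sigdS(T)$. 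No point outside $\sigS(T)$ needs separate treatment, since both sides are subsets of $\sigS(T)$.

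\textbf{Second description.} Negating (i)$\Leftrightarrow$(iv) of Proposition~\ref{prop:criterion} shows that, for $\lambda\in\sigS(T)$, $\lambda\in\sigeS(T)$ exactly when $\lambda$ is not isolated in $\sigS(T)$ or $E(\{\lambda\})$ has infinite rank. Here "finite rank" means that $\dimH\ran(E(\{\lambda\}))<\infty$. By Lemma~\ref{lem:eigenspace} this agrees with the kernel formulation in (iii), so the displayed set description follows.

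\textbf{Weyl spectrum formulation.} Lemma~\ref{lem:hierarchy} gives $\sigwS(T)=\sigeS(T)$. Substituting this into the first identity yields $\sigS(T)\setminus\sigwS(T)=\sigdS(T)$. Since $\sigwS(T)=\sigeS(T)\subset\sigS(T)$, we also get $\sigS(T)=\sigwS(T)\cup\sigdS(T)$, and this union is disjoint.

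There is no genuine obstacle at this stage. All the substantive work, namely the implication (iii)$\Rightarrow$(ii) that rests on Lemma~\ref{lem:restriction}, is already contained in Proposition~\ref{prop:criterion}. The only points needing care are bookkeeping: each set must be confined to $\sigS(T)$ before complements are taken, and the proof must appeal to Lemma~\ref{lem:hierarchy}, which requires $T=T^*$ and $\dimH\HS=\infty$ so that the Calkin algebra is nonzero.
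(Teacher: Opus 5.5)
Your proposal is correct and follows the paper's own proof. Both arguments note that $\sigeS(T)$ and $\sigdS(T)$ lie in $\sigS(T)$ and then apply (i)$\Leftrightarrow$(ii) of Proposition~\ref{prop:criterion}, read off the set description as the negation of (i)$\Leftrightarrow$(iv), and obtain the Weyl formulation from $\sigwS(T)=\sigeS(T)$ in Lemma~\ref{lem:hierarchy}. One minor point: the proof of Lemma~\ref{lem:hierarchy} uses only $Q_q(T)=Q_q(T)^*$ and never touches the Calkin algebra, so the standing hypothesis $\dimH\HS=\infty$ enters through Proposition~\ref{prop:criterion}, via Lemma~\ref{lem:fredholm} and Atkinson's theorem.
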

\begin{proof}
Both $\sigeS(T)$ and $\sigdS(T)$ are contained in $\sigS(T)$, and by
Proposition \ref{prop:criterion} a point of $\sigS(T)$ lies outside
$\sigeS(T)$ precisely when it lies in $\sigdS(T)$; the second
description is the negation of (iv). The last formulation follows from
$\sigwS(T)=\sigeS(T)$ (Lemma \ref{lem:hierarchy}).
\end{proof}

This sharpens, for self-adjoint $T$, the inclusion \eqref{eq:inclusion}
of \cite[Corollary 3.11]{BaloudiBelgacemJeribi2022}, and its proof is
independent of \eqref{eq:inclusion}.

Finally, we record the practical form of the essential $S$-spectrum,
the spectral measure being rarely explicit. The proof is classical; we
include it because the equivalence of (ii) and (iii) is what makes
$Q_\lambda(T)$, rather than $T-\lambda\Id$, usable as the test
operator.

Because $\HH$ is non-commutative, weak convergence requires a precise
definition. Call a bounded map $\ell:\HS\to\HH$ a (right)
functional if $$\ell(xq)=\ell(x)q \hbox{ for all } x\in\HS, q\in\HH.$$ Every
$y\in\HS$ yields such a functional $\ell_y(x)=\langle y,x\rangle$, and
by Riesz representation \cite{GhiloniMorettiPerotti2013} every bounded
functional arises this way from a unique $y\in\HS$.
 We say
$x_n\rightharpoonup x$ weakly if $\ell(x_n)\to\ell(x)$ for all
such $\ell$, equivalently if $\langle y,x_n\rangle\to\langle y,x\rangle$
for all $y\in\HS$, this is the meaning of $x_n\rightharpoonup0$ below.
As $\HS$ is only a right quaternionic Hilbert space in this section
(hypothesis (A3) is not in force, Sections
\ref{sec:fredholm}--\ref{sec:main} needing only (A1)--(A2)), this
right-linear notion is the only one available, so no ambiguity arises.
Only once a left scalar multiplication is fixed on $\HS$, making it a
two-sided module (hypothesis (A3), required from Section
\ref{sec:riesz} on), does a second family of left functionals
$\ell(qx)=q\ell(x)$ appear, endowing the two-sided space with two
natural duals, that distinction plays no role here.

\begin{corollary}
\label{thm:weyl-criterion}
Let $\lambda\in\RR$. The following are equivalent:
\begin{enumerate}[label=(\roman*),leftmargin=2.4em]
\item $\lambda\in\sigeS(T)$,
\item there is a sequence $(x_n)$ with $\|x_n\|=1$,
$x_n\rightharpoonup0$ weakly and $\|Q_\lambda(T)x_n\|\to0$,
\item there is a sequence $(x_n)$ with $\|x_n\|=1$,
$x_n\rightharpoonup0$ weakly and $\|(T-\lambda\Id)x_n\|\to0$.
\end{enumerate}
\end{corollary}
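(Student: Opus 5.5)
I would prove the cycle (i) $\Rightarrow$ (iii) $\Rightarrow$ (ii) $\Rightarrow$ (i), working throughout with $A=T-\lambda\Id=A^*$ and using Lemma \ref{lem:fredholm}: $\lambda\in\sigeS(T)$ if and only if $A$ is not Fredholm.

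\emph{(iii) $\Rightarrow$ (ii).} Since $\|Q_\lambda(T)x_n\|=\|A(Ax_n)\|\le\|A\|\,\|Ax_n\|\to0$, the same sequence $(x_n)$ works. Nothing more is needed.

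\emph{(ii) $\Rightarrow$ (i).} I argue by contraposition. Suppose $\lambda\notin\sigeS(T)$, so $A$ is Fredholm. By the proof of Lemma \ref{lem:fredholm}, $\HS=\ker A\oplus\ker(A)^\perp$ orthogonally, $A$ vanishes on $\ker A$, and $A_0=A|_{\ker(A)^\perp}$ is invertible on $\ker(A)^\perp$. Hence $A^2$ is also Fredholm, and $\|A^2y\|\ge c^2\|y\|$ for $y\in\ker(A)^\perp$, where $c=\|A_0^{-1}\|^{-1}$. Let $P$ be the orthogonal projection onto $\ker A$, which is finite dimensional, with orthonormal basis $e_1,\dots,e_m$. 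Then $Px_n=\sum_k e_k\langle e_k,x_n\rangle\to0$ in norm, because $x_n\rightharpoonup0$; this step uses the right-linear definition of weak convergence exactly as stated. It follows that
\[
\|Q_\lambda(T)x_n\|=\|A^2(x_n-Px_n)\|\ge c^2\|x_n-Px_n\|\to c^2>0,
\]
which contradicts (ii).

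\emph{(i) $\Rightarrow$ (iii).} This is the main step, and I would use Proposition \ref{prop:criterion}(iv). If $\lambda\in\sigeS(T)$, then either $\lambda$ is not isolated in $\sigS(T)$, or $\ran E(\{\lambda\})$ is infinite dimensional.

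\begin{itemize}
\item In the second case, $\ran E(\{\lambda\})=\ker A$ by Lemma \ref{lem:eigenspace}, and this subspace is infinite dimensional. Any orthonormal sequence in it gives $\|Ax_n\|=0$. Such a sequence converges weakly to $0$ by Bessel's inequality: $\sum_n|\langle y,x_n\rangle|^2\le\|y\|^2$, which holds in the quaternionic setting.
\item In the first case, pick distinct points $\lambda_k\in\sigS(T)$ with $\lambda_k\to\lambda$, and choose pairwise disjoint open intervals $J_k\ni\lambda_k$ of radius $r_k\to0$ with $J_k\subset(\lambda-2|\lambda_k-\lambda|,\lambda+2|\lambda_k-\lambda|)$. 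This is possible after passing to a subsequence with $|\lambda_{k+1}-\lambda|$ much smaller than $|\lambda_k-\lambda|$, and I may also arrange $\lambda\notin J_k$. Since $\lambda_k\in\supp(E)$, each $E(J_k)\ne0$, so I choose a unit vector $x_k\in\ran E(J_k)$. The $x_k$ are orthonormal because $E(J_k)E(J_l)=E(J_k\cap J_l)=0$ for $k\ne l$, so $x_k\rightharpoonup0$ by Bessel's inequality. By \eqref{eq:scalar},
\[
\|Ax_k\|^2=\int_{J_k}|t-\lambda|^2\,d\mu_{x_k}\le\big(3|\lambda_k-\lambda|\big)^2\to0 .
\]
\end{itemize}

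The obstacle I anticipate is only bookkeeping: keeping the intervals disjoint while shrinking them toward $\lambda$, and justifying Bessel's inequality and the norm convergence $Px_n\to0$ with quaternionic coefficients written on the right. Separability of $\HS$ is not essential here, since we construct the orthonormal sequences explicitly.
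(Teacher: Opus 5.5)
Your proof is correct and follows the paper's cycle (i)$\Rightarrow$(iii)$\Rightarrow$(ii)$\Rightarrow$(i) with the same ingredients: Proposition \ref{prop:criterion}, Lemma \ref{lem:eigenspace}, $\supp(E)=\sigS(T)$, orthogonality of the spectral subspaces of disjoint sets, and Bessel's inequality. The differences are only organizational. For (i)$\Rightarrow$(iii), the paper first shows by contradiction that every $E((\lambda-\varepsilon,\lambda+\varepsilon))$ has infinite rank, using the same disjoint-open-sets device, and then picks orthonormal vectors inductively from these nested ranges; you instead split directly into the infinite-atom and non-isolated cases. For (ii)$\Rightarrow$(i), the paper uses the self-adjoint Fredholm operator $Q_\lambda(T)$ itself, whereas you pass through Lemma \ref{lem:fredholm} and square the lower bound for $T-\lambda\Id$.
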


\begin{proof}
(i) $\Rightarrow$ (iii). Here $\lambda\in\sigS(T)\subset\RR$. We
first claim that $E((\lambda-\varepsilon,\lambda+\varepsilon))$ has
infinite rank for every $\varepsilon>0$. Suppose otherwise, and put
$B=(\lambda-\varepsilon,\lambda+\varepsilon)$. Then $\sigS(T)\cap B$
must be finite. Indeed, if it were infinite, one could choose distinct
points $t_1,t_2,\dots$ in it together with pairwise disjoint open sets
$O_k\subset B$ satisfying $t_k\in O_k$. Each $E(O_k)$ is nonzero, since
$O_k$ is an open set meeting $\supp(E)=\sigS(T)$, and the ranges
$\ran(E(O_k))$ are mutually orthogonal and contained in
$\ran(E(B))$, which contradicts $\dimH\ran(E(B))<\infty$. But then
$\lambda$ would be isolated, with $E(\{\lambda\})\le E(B)$ of finite
rank, so Proposition \ref{prop:criterion} would give
$\lambda\notin\sigeS(T)$ a contradiction.

Now choose, by induction, unit vectors
$x_n\in\ran(E((\lambda-1/n,\lambda+1/n)))$ orthogonal to
$x_1,\dots,x_{n-1}$; this is possible because these subspaces are
infinite dimensional. The resulting sequence is orthonormal, hence
$x_n\rightharpoonup0$ by Bessel's inequality, and since $\mu_{x_n}$ is
carried by $(\lambda-1/n,\lambda+1/n)$, \eqref{eq:scalar} yields
$\|(T-\lambda\Id)x_n\|\le1/n$.

For (iii) $\Rightarrow$ (ii), it suffices to note that
$$\|Q_\lambda(T)x_n\|\le\|T-\lambda\Id\|\,\|(T-\lambda\Id)x_n\|.$$

(ii) $\Rightarrow$ (i). Assume $\lambda\notin\sigeS(T)$. Then
$A=Q_\lambda(T)=A^*$ is Fredholm, so $\ker(A)$ is finite dimensional,
$\HS=\ker(A)\oplus\ran(A)$ is an orthogonal decomposition, and
$\|Ay\|\ge c\|y\|$ holds on $\ran(A)$ for some $c>0$. Decompose
$x_n=k_n+r_n$ accordingly. Since $x_n\rightharpoonup0$ and
$\dimH\ker(A)<\infty$, we get $\|k_n\|\to0$, whence $\|r_n\|\to1$ and
therefore
$$\|Ax_n\|=\|Ar_n\|\ge c\|r_n\|\to c>0,$$ a contradiction.
\end{proof}
\section{Riesz projections are orthogonal}
\label{sec:riesz}

In this section (A3) is assumed and $T=T^*\in\BB(\HS)$. The
section stands logically apart from Sections
\ref{sec:fredholm}--\ref{sec:main}, in that none of its results enter
the proof of Theorem \ref{thm:main}. Its purpose is to address the
issue raised in Section \ref{sec:intro}, whether the Riesz projection
corresponding to an isolated spectral part reduces $T$. Recall from
\cite{BaloudiBelgacemJeribi2022,ColomboSabadiniStruppa2011,%
ColomboGantnerKimsey2018} that an isolated part of $\sigS(T)$
is a nonempty axially symmetric subset open and closed in $\sigS(T)$;
that for a $T$-admissible axially symmetric open $U_\Delta$ containing
$\Delta$ and no other point of $\sigS(T)$, and $I\in\sphere$,
\begin{equation}
 P_\Delta=\frac{1}{2\pi}\int_{\partial(U_\Delta\cap\CC_I)}
 S_L^{-1}(s,T)\,ds_I ,
 \qquad ds_I=\frac{ds}{I}=-I\,ds ,
 \label{eq:riesz}
\end{equation}
with $S_L^{-1}(s,T)=-Q_s(T)^{-1}(T-\overline s\,\Id)$, is an
idempotent of $\BB(\HS)$ commuting with $T$, independent of $U_\Delta$
and of $I$; and \cite[Definition 3.16]{BaloudiBelgacemJeribi2022} that
$m_T(q)=\dimH\ran(P_{[q]})$ when $[q]$ is an isolated part.

We shall use the following uniqueness statement, which we quote in
full rather than invoke, since it is the only external ingredient of
this section.

\begin{theorem}[{\cite[Theorem 3.6]{BaloudiBelgacemJeribi2022}}]
\label{thm:bbj36}
Let $T\in\BB(\HS)$ and let $P\in\BB(\HS)$ be an idempotent commuting
with $T$. Put $T_1=T|_{\ran(P)}$ and $T_2=T|_{\ker(P)}$. If
$\dist\big(\sigS(T_1),\sigS(T_2)\big)>0$, then $\sigS(T_1)$ is an
isolated part of $\sigS(T)$ and
\[
 \ran(P)=\ran\big(P_{\sigS(T_1)}\big),
 \qquad
 \ker(P)=\ker\big(P_{\sigS(T_1)}\big).
\]
\end{theorem}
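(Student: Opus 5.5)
The plan is to prove that $P=P_\Delta$ with $\Delta:=\sigS(T_1)$; this gives both displayed identities at once. The idea is to show that the two "mixed" operators $(\Id-P)P_\Delta$ and $P(\Id-P_\Delta)$ vanish, using an intertwining argument with real-coefficient functions of $T$. That route avoids having to make $P$ commute with the left scalar multiplications $L_{\overline s}$ hidden in $S_L^{-1}(s,T)$, which we have no reason to expect.

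\emph{Step 1 (isolated part).} $P$ commutes with $T$, so $\ran(P)$ and $\ker(P)$ are closed, $T$-invariant, and $\HS=\ran(P)\oplus\ker(P)$ is a topological direct sum. Lemma \ref{lem:square} gives $\sigS(T)=\sigS(T_1)\cup\sigS(T_2)$. The two sets are compact, axially symmetric (being $S$-spectra), and at positive distance by hypothesis. Hence $\Delta$ is open and closed in $\sigS(T)$, and it is an isolated part in the sense recalled before \eqref{eq:riesz}; this includes the case $\Delta=\varnothing$ if one allows it.

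\emph{Step 2 (properties of $P_\Delta$).} I take from the cited references the standard facts about $P_\Delta$, in the same spirit as for \eqref{eq:riesz}. These are: $P_\Delta$ is an idempotent commuting with $T$; $\sigS(T|_{\ran P_\Delta})\subset\Delta$; and $\sigS(T|_{\ker P_\Delta})\subset\sigS(T)\setminus\Delta=\sigS(T_2)$.

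\emph{Step 3 (intertwining kills the mixed terms).} Set $X=(\Id-P)P_\Delta$, viewed as a map $\ran(P_\Delta)\to\ker(P)$. Since $P$ and $P_\Delta$ both commute with $T$, we have $X\,T|_{\ran P_\Delta}=T_2X$. Therefore $X\,p(T|_{\ran P_\Delta})=p(T_2)\,X$ for every polynomial $p$ with \emph{real} coefficients, and then for every real rational function with poles off $\sigS(T)$, because $Q_q(\cdot)^{-1}$ is such a function. Now choose an open axially symmetric $U\supset\sigS(T)$ that is the disjoint union of a neighbourhood $U_1$ of $\Delta$ and a neighbourhood $U_2$ of $\sigS(T_2)$. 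Let $f$ be intrinsic (slice) on $U$ with $f=1$ on $U_1$ and $f=0$ on $U_2$. By the Runge-type approximation for intrinsic functions on axially symmetric compacts, $f$ is a uniform limit of real rational functions $r_n$ with poles off $U$. The $S$-functional calculus is continuous with respect to this convergence, so $r_n(A)\to f(A)$ for each restriction $A$. Since $f(T|_{\ran P_\Delta})=\Id$ and $f(T_2)=0$, passing to the limit gives
\[
X=X\,f(T|_{\ran P_\Delta})=f(T_2)\,X=0 .
\]
Symmetrically, $Y=P(\Id-P_\Delta)$ maps $\ker(P_\Delta)\to\ran(P)$ and intertwines $T|_{\ker P_\Delta}$ with $T_1$. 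Using $1-f$ in place of $f$ gives $Y=0$.

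\emph{Step 4 (conclusion).} From $X=0$ we get $P_\Delta=PP_\Delta$, and from $Y=0$ we get $P=PP_\Delta$. Hence $P=P_\Delta$, so $\ran(P)=\ran(P_\Delta)$ and $\ker(P)=\ker(P_\Delta)$.

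The main obstacle is Step 3. It needs two things: that the $S$-functional calculus of an intrinsic function is a limit of real-coefficient rational functions of the operator, and that real rational functions of $T$ restrict correctly to the invariant subspaces. The first would be handled by citing the Runge theorem for intrinsic slice functions from \cite{ColomboSabadiniStruppa2011,ColomboGantnerKimsey2018}. The second is elementary, since $Q_q(T)^{-1}$ preserves every $T$-invariant complemented subspace on which $Q_q$ is invertible, exactly as in Lemma \ref{lem:square}.
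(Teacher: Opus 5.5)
The paper does not prove this statement. It imports it verbatim from Baloudi--Belgacem--Jeribi as the only external ingredient of Section~\ref{sec:riesz}, so your argument is an independent proof, and it is correct.

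Your route is a Sylvester--Rosenblum type separation argument. The operators $X=(\Id-P)P_\Delta$ and $Y=P(\Id-P_\Delta)$ intertwine restrictions of $T$ whose $S$-spectra lie in the disjoint sets $\Delta$ and $\sigS(T_2)$. A locally constant intrinsic $f$, approximated by real rational functions, then forces both to vanish.

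What this buys is that $P$ is only ever moved past real-coefficient functions of $T$. That is the right precaution. $P$ is not assumed to commute with the left multiplications $L_{\overline s}$ inside $S_L^{-1}(s,T)$, so the complex-case argument ($P$ commutes with the resolvent, hence with the Riesz projection) does not transfer verbatim. The bare citation in the paper conceals this point. Your Runge step in fact shows that $P_\Delta$ is a norm limit of real rational functions of $T$, so $PP_\Delta=P_\Delta P$ also follows.

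Your proof also gives the operator identity $P=P_\Delta$ directly, which the proof of Theorem~\ref{thm:riesz} only reconstructs from ranges and kernels. Your Step 2 further makes visible that Theorem~\ref{thm:riesz}, through this uniqueness statement, uses more of $P_\Delta$ than Remark~\ref{rem:chi} says. Being an idempotent commuting with $T$ is not enough, since $0$ and $\Id$ also qualify. What is actually used is the spectral splitting $\sigS(T|_{\ran P_\Delta})\subset\Delta$ and $\sigS(T|_{\ker P_\Delta})\subset\sigS(T)\setminus\Delta$.

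One point needs a sentence. $\ran(P_\Delta)$ and $\ker(P)$ are closed right submodules, but in general they are not invariant under the left multiplication of (A3). So $f(T|_{\ran P_\Delta})$ and $f(T_2)$ are not defined by the $S$-functional calculus of $\HS$. There are two easy repairs:
\begin{enumerate}
\item Give each subspace its own left multiplication through a Hilbert basis. The value of an intrinsic $f$ does not depend on that choice, since it equals $\lim_n r_n$.
\item Bypass the $S$-calculus on the subspaces. In the complexification of the real Banach algebra $\BB(\ran P_\Delta)$ (resp.\ $\BB(\ker P)$), the ordinary spectrum of an element $a$ is $\sigS(a)\cap\CC_I$, because $(a-z)(a-\bar z)=a^2-2\Rea(z)a+|z|^2$. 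The classical Riesz--Dunford calculus together with Runge's theorem then gives $r_n(T|_{\ran P_\Delta})\to\Id$ and $r_n(T_2)\to0$ directly. Here Runge's theorem is symmetrised to real coefficients, with convergence uniform on compact subsets of $U\cap\CC_I$.
\end{enumerate}
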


\begin{theorem}
\label{thm:riesz}
Let $T=T^*$ and let $\Delta$ be an isolated part of $\sigS(T)$. Then
$$P_\Delta=E(\Delta).$$ In particular $P_\Delta$ is an orthogonal
projection.
\end{theorem}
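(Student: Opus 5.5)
The plan is to deduce the statement from the uniqueness result Theorem \ref{thm:bbj36}, applied to the idempotent $P=E(\Delta)$. Since $T=T^*$, Proposition \ref{prop:real-spectrum} gives $\sigS(T)\subset\RR$. Hence an isolated part $\Delta$ is a nonempty subset of $\RR$ that is open and closed in the compact set $\sigS(T)$; in particular $\Delta$ is compact. Lemma \ref{lem:restriction} therefore applies to $\Delta$.

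\emph{Step 1.} Take $P=E(\Delta)$. It is an orthogonal projection, hence an idempotent in $\BB(\HS)$. It commutes with $T$, because both are Borel functions of $T$ in the real functional calculus, which is a homomorphism. Lemma \ref{lem:restriction} gives $\ran(P)=\HS_1$, $\ker(P)=\HS_2$, with
\[
 \sigS(T_1)=\Delta,\qquad \sigS(T_2)=\Delta'=\sigS(T)\setminus\Delta .
\]
If $\Delta'\neq\varnothing$, the same lemma gives $\dist(\Delta,\Delta')>0$. If $\Delta'=\varnothing$, then $\HS_2=\{0\}$, $E(\Delta)=\Id$, and the distance condition holds vacuously with the usual convention $\dist(\cdot,\varnothing)=+\infty$. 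In that case one can also argue directly: $P_{\sigS(T)}=\Id$, since the Riesz integral over a contour enclosing the whole $S$-spectrum is the identity, which is the standard $S$-functional calculus fact that $1(T)=\Id$.

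\emph{Step 2.} Theorem \ref{thm:bbj36} now yields
\[
 \ran(E(\Delta))=\ran(P_{\Delta}),\qquad \ker(E(\Delta))=\ker(P_{\Delta}),
\]
using $\sigS(T_1)=\Delta$. Two idempotents with the same range and the same kernel are equal. Indeed, write $x=y+z$ with $y\in\ran$ and $z\in\ker$ (the common decomposition). Both idempotents send $x$ to $y$. Hence $P_\Delta=E(\Delta)$, and consequently $P_\Delta^*=P_\Delta$.

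The main point needing care is the applicability of Theorem \ref{thm:bbj36} in the setting of (A3). Its $S$-spectra are computed through $Q_q$, which has real coefficients, so they agree with those in Lemma \ref{lem:restriction}. One also has to check that $\HS_1$ and $\HS_2$ are the objects the theorem speaks about: closed $T$-invariant right submodules, with $P_{\sigS(T_1)}$ the Riesz projection of the isolated part $\Delta$. Since $\Delta\subset\RR$, it is trivially axially symmetric, so $P_\Delta$ is defined by \eqref{eq:riesz}. I expect no further obstacle. The substantive spectral splitting is already contained in Lemma \ref{lem:restriction}, and the degenerate case $\Delta=\sigS(T)$ is the only place where a separate remark is needed.
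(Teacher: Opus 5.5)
Your proposal is correct and follows the paper's proof essentially step for step. You dispose of $\Delta=\sigS(T)$ separately, where both operators are $\Id$. Otherwise you apply Theorem \ref{thm:bbj36} to $P=E(\Delta)$, with the spectral splitting and positive distance supplied by Lemma \ref{lem:restriction}, and conclude because an idempotent is determined by its range and kernel.
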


\begin{proof}
If $\Delta=\sigS(T)$, both operators equal $\Id$. Otherwise, put
$P=E(\Delta)$ and check the hypotheses of
\cite[Theorem 3.6]{BaloudiBelgacemJeribi2022} one by one: $P$ is an
idempotent of $\BB(\HS)$ commuting with $T$, and
$\dist(\sigS(T_1),\sigS(T_2))>0$ for $T_1=T|_{\ran(P)}$ and
$T_2=T|_{\ker(P)}$. The first holds since $E$ is the spectral measure
of $T$ and $E(\Delta)$ is an orthogonal projection. For the second,
$\Delta$ is compact, being closed in the compact set $\sigS(T)$, so
Lemma \ref{lem:restriction} gives $$\sigS(T_1)=\Delta \hbox{ and }
\sigS(T_2)=\sigS(T)\setminus\Delta$$ these are disjoint compact sets,
hence at positive distance. Theorem \ref{thm:bbj36} then applies,
giving $$\ran(E(\Delta))=\ran(P_\Delta) \hbox{ and }
\ker(E(\Delta))=\ker(P_\Delta).$$ Since an idempotent is determined by
its range and kernel, if $$\ran(P)=\ran(P'),
\ker(P)=\ker(P') \hbox{ and } x=y+z \hbox{ with }  y\in\ran(P), z\in\ker(P),$$ then
$$Py=y=P'y \hbox{ and } Pz=0=P'z,$$ we conclude $$P_\Delta=E(\Delta).$$
\end{proof}
  
\begin{corollary}
\label{cor:multiplicity}
Let $\lambda\in\sigS(T)$ be isolated. Then the multiplicity of
$\lambda$ satisfies
\[
 m_T(\lambda)=\dimH\ran\big(P_{\{\lambda\}}\big)
 =\dimH\ker(T-\lambda\Id).
\]
Moreover, if $\dimH\HS=\infty$, then $\lambda\in\sigdS(T)$ exactly when
$m_T(\lambda)<\infty$. Consequently, the argument outlined in Section
\ref{sec:intro} is now justified: since
$\ran(P_{\{\lambda\}})$ reduces $T$, the restriction of $T$ to this
subspace is self-adjoint.
\end{corollary}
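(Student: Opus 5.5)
The plan is to obtain every claim of Corollary \ref{cor:multiplicity} directly from Theorem \ref{thm:riesz} with $\Delta=\{\lambda\}$, combined with Lemma \ref{lem:eigenspace} and Proposition \ref{prop:criterion}.

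First, since $\lambda\in\sigS(T)\subset\RR$ is isolated, $\{\lambda\}=[\lambda]$ is nonempty, axially symmetric, and open and closed in $\sigS(T)$. It is therefore an isolated part of $\sigS(T)$. Theorem \ref{thm:riesz} then gives $P_{\{\lambda\}}=E(\{\lambda\})$, so
\[
 m_T(\lambda)=\dimH\ran\big(P_{\{\lambda\}}\big)=\dimH\ran\big(E(\{\lambda\})\big)=\dimH\ker(T-\lambda\Id).
\]
The first equality is the definition \cite[Definition 3.16]{BaloudiBelgacemJeribi2022} and the last is Lemma \ref{lem:eigenspace}. This settles the multiplicity identity.

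Second, assume $\dimH\HS=\infty$, which is needed for Proposition \ref{prop:criterion}. For isolated $\lambda\in\sigS(T)$, condition (iii) of that proposition reduces to $\dimH\ker(T-\lambda\Id)<\infty$, i.e.\ to $m_T(\lambda)<\infty$ by the first part. The equivalence (ii)$\Leftrightarrow$(iii) then says exactly that $\lambda\in\sigdS(T)$ if and only if $m_T(\lambda)<\infty$.

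Third, for the justification of the introductory argument: $P_{\{\lambda\}}=E(\{\lambda\})$ is an orthogonal projection commuting with $T$. Hence both $\ran(P_{\{\lambda\}})$ and its orthogonal complement $\ker(P_{\{\lambda\}})$ are $T$-invariant, so $\ran(P_{\{\lambda\}})$ reduces $T$. For $x,y$ in this subspace,
\[
 \langle T|x,y\rangle=\langle x,Ty\rangle ,
\]
so the restriction is self-adjoint on the closed submodule. One can even note directly that the restriction equals $\lambda\Id$, since $\ran(E(\{\lambda\}))=\ker(T-\lambda\Id)$. This recovers the inclusion $\ran(P_{\{\lambda\}})\subset\ker(T-\lambda\Id)$ that the heuristic argument needed.

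There is no real obstacle, since all the work sits in Theorem \ref{thm:riesz} and Lemma \ref{lem:restriction}. The only point to check with care is that the singleton $\{\lambda\}$ qualifies as an isolated part in the sense required by (\ref{eq:riesz}), namely axial symmetry. This holds because real points satisfy $[\lambda]=\{\lambda\}$.
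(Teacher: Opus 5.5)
Your proposal is correct and follows the paper's own proof essentially verbatim. Theorem~\ref{thm:riesz} with $\Delta=\{\lambda\}$, together with Lemma~\ref{lem:eigenspace}, gives the multiplicity identity; Proposition~\ref{prop:criterion} gives the characterization of $\sigdS(T)$; and the fact that $P_{\{\lambda\}}=E(\{\lambda\})$ is an orthogonal projection commuting with $T$ gives the reducing property. There is one cosmetic slip: your displayed identity should read $\langle Tx,y\rangle=\langle x,Ty\rangle$.
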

\begin{proof}
Theorem \ref{thm:riesz} with $\Delta=\{\lambda\}$, then Lemma
\ref{lem:eigenspace}; the second assertion is Proposition
\ref{prop:criterion}. The last one holds because $P_{\{\lambda\}}$ is
an orthogonal projection commuting with $T$.
\end{proof}

\begin{remark}
\label{rem:chi}
Theorem \ref{thm:riesz} identifies $P_\Delta$ with $\chi_\Delta(T)$
computed in the Borel functional calculus. One cannot reach this by
feeding $\chi_\Delta$ into the $S$-functional calculus, since
$\chi_\Delta$ is not slice hyperholomorphic. Replacing it by a locally
constant, hence intrinsic, function $f$ equal to $1$ near $\Delta$ and
$0$ near $\sigS(T)\setminus\Delta$ gives $f(T)=P_\Delta$ from
\eqref{eq:riesz}, but identifying that $f(T)$ with $\int_\RR f\,dE$
then requires a compatibility theorem between the two calculi
\cite{ColomboGantnerKimsey2018}. The proof above uses
\eqref{eq:riesz} only through the fact that $P_\Delta$ is an
idempotent commuting with $T$.
\end{remark}

\section{Examples}
\label{sec:examples}

\begin{example}
\label{ex:diagonal}
Let $\HS=\ell^2(\NN,\HH)$ with canonical orthonormal basis
$(e_n)_{n\ge1}$ and $$Te_n=e_n\lambda_n$$ with $\lambda_n\subset\RR$
bounded. Then $$T=T^*, \sigS(T)=\overline{\{\lambda_n\}},$$ and the
atom of $E$ at $\lambda$ is the orthogonal projection onto
$\overline{\operatorname{span}}\{e_n:\lambda_n=\lambda\}$, of rank
$\#\{n:\lambda_n=\lambda\}$. By Theorem \ref{thm:main}, $\sigdS(T)$
consists of the isolated values taken finitely often. For
$\lambda_n=1$ ($n$ even) and $\lambda_n=1/(n+1)$ ($n$ odd),
\[
 \sigS(T)=\{1\}\cup\{\tfrac12,\tfrac14,\tfrac16,\dots\}\cup\{0\},
 \qquad
 \sigeS(T)=\{0,1\},
\]
so both mechanisms occur: $0$ is not isolated, $1$ is isolated with an
atom of infinite rank.
\end{example}

\begin{example}
\label{ex:multiplication}
Let $(X,\mu)$ be $\sigma$-finite with $L^2(X,\mu,\HH)$ separable and
infinite dimensional, and let $M_\phi f=\phi f$, where $\phi$ is a bounded real measurable function. Then $M_\phi$ is self-adjoint, and since $Q_q(M_\phi)$ acts as multiplication by $\varphi_q\circ\phi$, we have
\[
\sigS(M_\phi)=\essran(\phi).
\]

A point $\lambda$ is an eigenvalue
exactly when $\mu(\phi^{-1}(\lambda))>0$, with eigenspace
$L^2(\phi^{-1}(\lambda),\mu;\HH)$, finite dimensional precisely when
$\phi^{-1}(\lambda)$ is, modulo null sets, a finite union of atoms of
$\mu$. For non-atomic $\mu-$Lebesgue measure on $\RR$, then
\[
\sigdS(M_\phi)=\varnothing
\qquad\text{and}\qquad
\sigeS(M_\phi)=\sigS(M_\phi)=\essran(\phi).
\]

Observe here that the $S$-spectrum is the essential range: modifying $\phi$ on a null set leaves both $M_\phi$ and its $S$-spectrum unchanged.
\end{example}

\begin{example}
\label{ex:jordan}
Let $\HS=\HH^2$, the left multiplication acting componentwise so that
(A3) holds, and $T(x_1,x_2)=(x_2,0)$. Then, $T$ is nilpotent, $T^2=0$, and therefore
\[
Q_q(T)=-2\Rea(q)T+|q|^2\Id
\]
is invertible for every $q\neq0$. Consequently, $\sigS(T)=\{0\}$, which is an isolated part of the spectrum, and $P_{\{0\}}=\Id$. It follows that
\[
 m_T(0)=\dimH\ran\big(P_{\{0\}}\big)=2,
 \qquad
 \dimH\ker T=1,
\]
so $\ran(P_{\{0\}})\not\subset\ker T$, and Corollary \ref{cor:multiplicity} fails. This is exactly the step required by the argument in Section \ref{sec:intro}, and the step that Theorem \ref{thm:riesz} provides in the self-adjoint case. The example intentionally lies outside (A2), since $\HS$ is finite dimensional; what it tests is the first assertion of Corollary \ref{cor:multiplicity}, which does not need $\dimH\HS=\infty$. Nothing is claimed here about $\sigeS(T)$, nor about Theorem \ref{thm:main}.
\end{example}
\begin{example}
\label{ex:sphere}
Let $\HS=\ell^2(\NN,\HH)$ with the componentwise left multiplication,
and define $T$ by
\[
 Te_n=\mathbf i\,e_n\,d_n,
 \qquad d_n=\tfrac1n .
\]
Each $L_{\mathbf i}$ and each real diagonal factor is right linear, so
$T\in\BB(\HS)$ with $\|T\|=1$, and $T^*=-T$; in particular $T$ is
normal but not self-adjoint. Since $T^2e_n=-e_nd_n^2$,
\[
 Q_q(T)\Big(\sum_n e_nx_n\Big)
 =\sum_n e_n\,c_n(q)\,x_n ,
 \qquad
 c_n(q)=\big(|q|^2-d_n^2\big)-2\Rea(q)\,d_n\,\mathbf i ,
\]
where $c_n(q)$ acts by left multiplication. Thus $Q_q(T)$ is
invertible if and only if $$\inf_n|c_n(q)|>0.$$ Now $c_n(q)$ has real
part $|q|^2-d_n^2$ and $\mathbf i$-component $-2\Rea(q)d_n$, the
$\mathbf j$- and $\mathbf k$-components being zero, so
\[
 |c_n(q)|^2=\big(|q|^2-d_n^2\big)^2+4\Rea(q)^2d_n^2 ,
\]
which vanishes exactly when $|q|=d_n$ and $\Rea(q)=0$, the $d_n$ being
nonzero. Since $d_n\to0$, the infimum over $n$ vanishes only at such
$q$ and at $q=0$. Hence
\[
 \sigS(T)=\{0\}\cup\bigcup_{n\ge1}\tfrac1n\,\sphere ,
\]
a decreasing sequence of $2$-spheres accumulating at $0$. For
$q$ with $\Rea(q)=0$ and $|q|=1/n$, the class $[q]=\frac1n\sphere$ is
an isolated part of $\sigS(T)$, and $\ker(Q_q(T))=e_n\HH$ has
quaternionic dimension $1.$ Choosing $\HS_1=e_n\HH$ in Definition
\ref{def:finite-type} then yields $$q\in\sigdS(T).$$ Since $T$ is
compact, $\pi(T)=0$, and by \eqref{eq:calkin} we get
$\sigeS(T)=\{0\}$. Each of these assertions has been checked by direct
computation, and taken together they show that
$$\sigeS(T)=\sigS(T)\setminus\sigdS(T)$$ for this particular $T$ 
with the statement interpreted using $[q]$ in place of
$\{\lambda\}$ and $\ker(Q_q(T))$ in place of
$\ker(T-\lambda\Id)$.

We emphasize that this paper proves no general statement about
normal operators. What we have here is a single hand-computed
instance; its purpose is to demonstrate that the conclusion is not an
artefact of $\sigS(T)\subset\RR$, and to pinpoint the two obstacles
described in Section \ref{sec:conclusion}. In particular, the proofs
of Section \ref{sec:main}, which rely on
$Q_\lambda(T)=(T-\lambda\Id)^2$, do not carry over to it verbatim.
\end{example}

\section{Open problems}
\label{sec:conclusion}

\begin{enumerate}[leftmargin=2.4em]
\item \emph{Normal operators.} Example \ref{ex:sphere} suggests that
Theorem \ref{thm:main} holds for bounded normal $T$ with
$\ker(Q_q(T))$ in place of $\ker(T-\lambda\Id)$. The scheme of Section
\ref{sec:main} should transpose, but two things must be supplied.
First, a spectral measure on the quotient $\HH/\!\!\sim$ of $\HH$ by
conjugacy with the properties used in Lemma \ref{lem:restriction},
in particular $\supp(E)=\sigS(T)$ and multiplicativity of the induced
Borel calculus on functions of $[q]$. Second, a substitute for Lemma
\ref{lem:fredholm}: for non-real $q$ the operator $Q_q(T)$ is no
longer a square, so the passage between $\pi(Q_q(T))$ and
$\pi(T-\lambda\Id)$ used there is unavailable, and the Fredholm
criterion has to be formulated directly for $Q_q(T)$.
\item \emph{Unbounded operators.} Extend Theorem \ref{thm:main} to
an unbounded self-adjoint $T$ with dense domain $\mathcal D(T)$. The
Riesz projection attached to a \emph{bounded} isolated part $\Delta$
of $\sigS(T)$ still makes sense, and Theorem \ref{thm:riesz} should
persist; the obstacle is the domain, at three precise places. First,
in Lemma \ref{lem:restriction} one needs $E(\Delta)\HS\subset
\mathcal D(T)$ with $T|_{\HS_1}$ bounded, while $T|_{\HS_2}$ remains
unbounded, so the two restrictions are no longer of the same nature
and $Q_q(T|_{\HS_2})$ must be handled on
$\HS_2\cap\mathcal D(T^2)$. Second, the inverse
$\psi(T)|_{\HS_1}$ built in Step 1 of that lemma has to be shown to
map into $\mathcal D(T^2)$. Third, the equality
$\ran(T-\lambda\Id)=\HS_2$ in Proposition \ref{prop:criterion}
requires $T|_{\HS_2}$ to be self-adjoint on
$\HS_2\cap\mathcal D(T)$, which is where the reducing property of the
orthogonal decomposition is used. Note also that $\sigS(T)$ is then
closed but need not be bounded, so \emph{isolated} must be understood
in $\HH$ and Definition \ref{def:finite-type} adjusted accordingly.
\end{enumerate}

\section*{Declarations}

\noindent
\textbf{Funding.} No funds, grants or other support were received
during the preparation of this manuscript.

\medskip\noindent
\textbf{Competing interests.} The authors have no competing interests
to declare that are relevant to the content of this article.

\medskip\noindent
\textbf{Data availability.} No datasets were generated or analysed
during the current study.

\medskip\noindent
\textbf{Author contributions.} All authors contributed equally to the
conception, the writing and the revision of the manuscript, and
approved the final version.

\end{document}